\def\C{\mathbbm{C}}
\def\R{\mathbbm{R}}
\def\Cn{\C^n}
\def\Cnn{\C^{n\times n}}
\def\CK{{\cal K}}   
\def\mrpoly{\pi}    
\def\nev{\mbox{{\it nev}}}
\def\rtol{\mbox{{\it rtol}}}
\def\pof{\mbox{{\it pof}}@}
\def\nrhs{\mbox{{\it nrhs}}}
\def\dphiin{d\phi_{\rm in}}
\def\pofcutoff{\mbox{{\it pofcutoff}}@}
\def\MaxPof{\text{{\it MaxP\kern-1pt o\kern-.25pt f}}}
\def\EatDot#1{}  
\def\wh#1{\widehat{#1}}
\def\eps{\varepsilon}
\active \gdef@{\mkern1mu}} 
\begin{document}

\title{Polynomial Approximation to the \\ Inverse of a Large Matrix}

\author{Mark Embree\footnotemark[2]
\and Joel A. Henningsen\footnotemark[3]\and
Jordan Jackson\footnotemark[2]\and\newline\ \ \ Ronald B. Morgan\footnotemark[3] }

\maketitle

\renewcommand{\thefootnote}{\fnsymbol{footnote}}
\footnotetext[2]{Department of Mathematics, Virginia Tech, Blacksburg, VA 24061\\ ({\tt embree@vt.edu},  {\tt jgjacks4@vt.edu}).}
\footnotetext[3]{Department of Mathematics, Baylor University, Waco, TX 76798-7328\\ ({\tt joel.henningsen32@gmail.com}, {\tt Ronald\_Morgan@baylor.edu}). }

\renewcommand{\thefootnote}{\arabic{footnote}}

\begin{abstract}
The inverse of a large matrix can often be accurately approximated by a polynomial of degree significantly lower than the order of the matrix.  The iteration polynomial generated by a run of the GMRES algorithm is a good candidate, and its approximation to the inverse often seems to track the accuracy of the GMRES iteration.  We investigate the quality of this approximation through theory and experiment, noting the practical need to add copies of some polynomial terms to improve stability.  To mitigate storage and orthogonalization costs, other approaches have appeal, such as polynomial preconditioned GMRES and deflation of problematic eigenvalues.  Applications of such polynomial approximations include solving systems of linear equations with multiple right-hand sides (where the solutions to subsequent problems come simply by multiplying the polynomial against the new right-hand sides) and variance reduction in multilevel Monte Carlo methods.

\end{abstract}

\begin{keywords}
linear equations, GMRES, polynomial preconditioning, matrix inverse, eigenvalues
\end{keywords}

\begin{AMS}
65F10, 15A09
\end{AMS}

\pagestyle{myheadings}
\thispagestyle{plain}
\markboth{M. EMBREE, J. A. HENNINGSEN, J. JACKSON, AND R. B. MORGAN}{POLYNOMIAL APPROXIMATION TO AN INVERSE}

\section{Introduction}

Inverses of matrices occur throughout applications in mathematics, science, and engineering.  One rarely needs to access the inverse explicitly;  more typically one seeks the action of the inverse multiplied against a vector, i.e., the solution of a system of linear equations.  For dense matrices this solution can be found via a matrix factorization; for large, sparse problems, one typically solves the system approximately with an iterative method that, effectively, multiplies an approximation of the inverse against a vector.  Such algorithms are often motivated by invoking the Cayley--Hamilton theorem, which implies that the inverse of a matrix $A\in\C^{n\times n}$ can be expressed as a polynomial of degree $n-1$ (or less) in $A$.\ \ While it is impractical to compute this polynomial for large $n$, one can often find satisfactory approximations to $A^{-1}$ using polynomials of significantly lower degree.  We describe several ways to construct such an approximate polynomial representation of the inverse of a large matrix.  We show that this polynomial can be accurate and useful, applying it to solve systems of linear equations with multiple right-hand sides.  A second application, described in~\cite{HighDegrPolyQCD,MultPolyMonteCarlo}, uses a polynomial approximation to an inverse to reduce variance in a Multilevel Monte Carlo sampling of the trace of the inverse.  Given the ubiquity of the matrix inverse, we anticipate numerous other applications.

This work builds on the development of stable polynomial preconditioners for eigenvalue problems~\cite{PPArn,Tho06} and linear systems~\cite{PPGStable} based on the GMRES (Generalized Minimum Residual) algorithm~\cite{SaSc}.  
Here, we focus on the quality of a polynomial $p(A)$ as an approximation of $A^{-1}$ itself,
addressing theoretical aspects but focusing on practical considerations required to make such
approximations useful.
An implementation of the GMRES residual polynomial using its roots (harmonic Ritz values) is described in~\cite{PPArn}.  This approach is more stable than most previous methods for implementing the GMRES polynomial~\cite{seed,Jo,PPG,NaReTr} and cheaper to implement than another approach~\cite{Tho06}.  However, this polynomial can be prone to instabilities, prompting the stability control method proposed in~\cite{PPArn}: the polynomial is augmented with extra copies of roots near outstanding eigenvalues.  This modification enables the practical use of higher degree polynomials.

When GMRES is applied to $Ax=b$, the residual vector can be written as 
\[r = b - A@\wh{x} = b - A@p(A)b = \pi(A)b,\] 
where $\wh{x} = p(A)b$ is the approximate solution generated by GMRES, and $\pi(z) = 1 - z p(z)$ is the GMRES residual polynomial.  For $r$ to be small in norm, $p(A)$ should approximate $A^{-1}$ (modulated by the vector $b$), and this $p$ provides the starting point for our approximate inverse polynomial.  In~\cite{PPGStable}, an algorithm is given for multiplying $p(A)$ against a vector, given the roots of $\pi$.  This implementation uses the technique from~\cite{PPArn} for adding extra roots to $\pi$ to promote stability, often making it possible to find a moderate degree polynomial $p$ that achieves the goal of $p(A) \approx A^{-1}$.  

We show that the accuracy of the polynomial $p(A)$ as an approximation to $A^{-1}$ follows the GMRES residual.  To reduce the cost of orthogonalization, we explore the use of a composite (or double) polynomial generated from polynomial preconditioned GMRES~\cite{PPGStable}, along with the nonsymmetric Lanczos algorithm. 
For systems with multiple right-hand sides, once a polynomial has been found, the solution of additional systems simply requires multiplying $p(A)$ against the new right-hand sides.  We also study a version of the polynomial that incorporates deflation, and can be effective at a lower degree.

The rest of this section reviews some previous work that is needed for this project.  New material begins in Section~\ref{sec:poly} with use of GMRES to find a polynomial approximation to the inverse, and provides some theoretical results on the quality of this approximation.  We also review harmonic Ritz values and give a new bound on their location.  Section~\ref{sec:alt} investigates other approaches for constructing the polynomial: restarted GMRES, nonsymmetric Lanczos, and polynomial preconditioned GMRES.\ \  Section~\ref{sec:mult_rhs} uses the polynomial to solve systems with multiple right-hand sides, then  Section~\ref{sec:deflated} describes how to incorporate deflation.  Finally, Section~\ref{sec:stab} considers some examples that provide challenges for stabilization.


\subsection{Polynomial preconditioning} \label{ssec:ppgmres}

When solving $Ax = b$, polynomial preconditioning is a way to transform the spectrum and thus improve convergence.
With a polynomial $p$ and right preconditioning, the linear system becomes 
\begin{equation}
Ap(A)y = b, \qquad x = p(A)y. \label{eqn:ppsys}
\end{equation}
Defining $\phi(z) \equiv z@ p(z)=1-\pi(z)$, the preconditioned system of linear equations is 
$\phi(A)y = b.$  
We let $dp$ denote the degree of $p$, so $\phi$ has degree $d\phi \equiv dp+1$.  

Much work has studied polynomial preconditioning; see, e.g., \cite{seed,As87,AsMaOt,PPArn,FiRe,Jo,La52B,LiXiVeYaSa,PPG,PPGStable,Rutis,Sa84b,Sa87b,Sa03,SmSa,Sti58,Tho06,vGi95,YeXiSa}.  We highlight Thornquist's thesis~\cite{Tho06}, which constructs polynomial approximations to $(A-\mu B)^{-1}$ using GMRES (and several nonsymmetric Lanczos methods) to expedite shift-invert eigenvalue calculations.  The use in~\cite{Tho06} of non-optimal short-recurrence Krylov subspace methods to generate polynomial approximations to matrix inverses merits further investigation.

In~\cite{PPArn,PPGStable}, starting with the GMRES residual polynomial $\pi$, the polynomial $\phi$ is chosen as $\phi(z) = 1 - \pi(z)$ and thus $p$ is also determined.  The roots of $\pi$ are the harmonic Ritz values~\cite{IE,IEN,PaPavdV}, and they are used to implement both polynomials $\phi$ and $p$.  
The paper~\cite{PPGStable} includes detailed algorithms that use the roots of $\pi$ to apply $\phi(A)$ and $p(A)$ to vectors (respecting complex conjugate pairs of roots); see \cite[alg.~1 and alg.~3]{PPGStable}.  Thus the polynomials needed for polynomial preconditioning can be determined with one cycle of GMRES (frequently with a random starting vector~\cite{PPGStable}).  Then GMRES can also be used to solve the linear equations as a part of polynomial preconditioned GMRES~\cite[alg.~4]{PPGStable}.  

\subsection{Stability for the polynomial}  \label{ssec:rev2}

The GMRES residual polynomial $\pi(z)$ is a product of terms of the form $(1-z/\theta_k)$, where $\theta_k$ is a \emph{harmonic Ritz value} (see section~\ref{sec:hritz}).
If $A$ has an eigenvalue that stands out from the rest of the spectrum, GMRES typically places a single $\theta_k$ nearby, giving $\pi$ a steep slope at that root that can lead to ill-conditioning and cause $p(A)$ evaluations to be unstable.  To improve stability, extra copies of roots corresponding to outstanding eigenvalues can be added.
This is implemented in~\cite[p.~A21]{PPArn} (see also~\cite[alg.~2]{PPGStable}).  For each root $\theta_k$, one computes a diagnostic quantity called $\pof(k)$ (for ``product of other factors'') that measures the magnitude of $\pi(\theta_k)$ with the $(1-z/\theta_k)$ term removed.  When $\log_{10}(\pof(k))$ exceeds some threshold, which here we call $\pofcutoff$, extra $(1-z/\theta_k)$ terms are appended to $\pi$.  In this paper, we use $\pofcutoff=8$, unless it is specified otherwise. 

By construction, $p(z)$ interpolates $1/z$ at the roots $\theta_k$ of $\pi$.
When we add a second copy of a root to stabilize $\pi(z)$, that root becomes a point where $p'(z)$ interpolates $(1/z)' = -1/z^2$.  If $\theta$ is an $m$-fold root of $\pi$,  one can show  
$p^{(j)}(\theta) = (-1)^j j!/\theta^{j+1} = {\rm d}^j/{\rm d}z^j\,(1/z) \big|_{z=\theta}$ for $j=1,\ldots, m-1$: thus $p(z)$ interpolates $1/z$ and its first $m-1$ derivatives at $z=\theta$.  
Figure~\ref{fig:cartoon} shows how a second copy of a root can stabilize $\pi(z)$ and $p(z)$ in the proximity of an outlying eigenvalue.

\begin{figure}[b!]
\begin{center}
\includegraphics[width=2.4in]{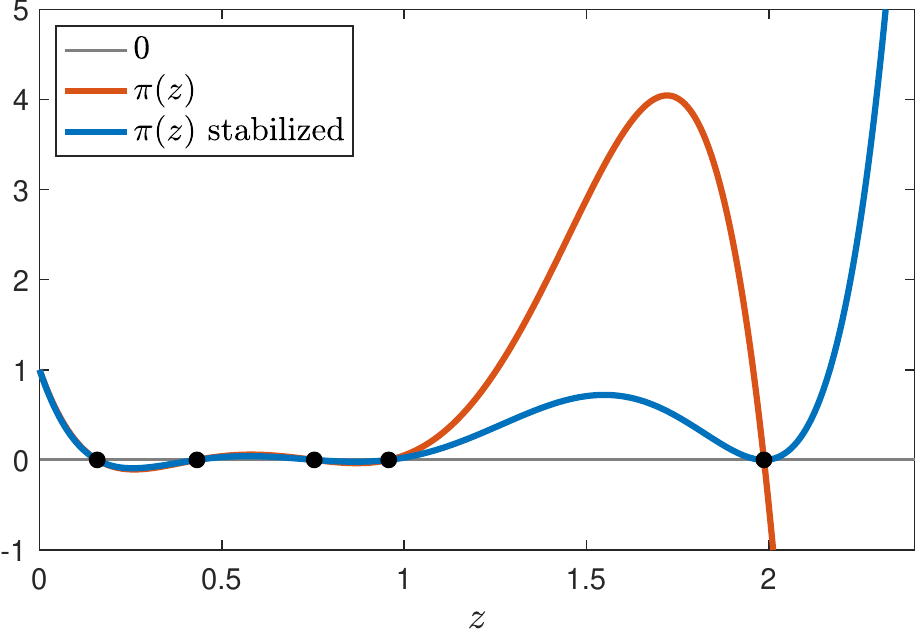} \qquad
\includegraphics[width=2.4in]{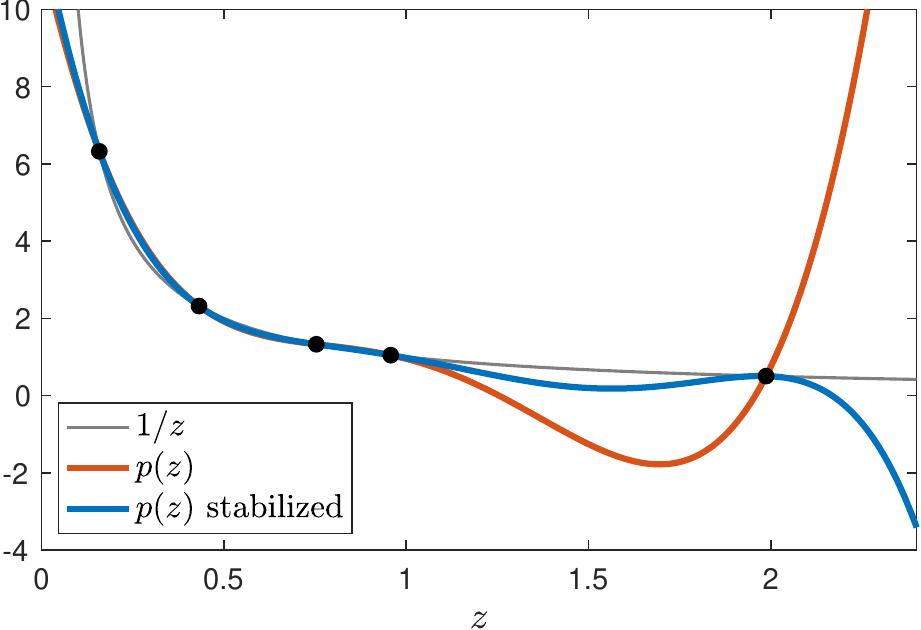} 
\end{center}

\vspace*{-7pt}
\caption{\label{fig:cartoon}
Adding an extra copy of a root can stabilize $\pi$ and $p$.
This $A$ has real eigenvalues in $[0.1,1]$ and one outlying eigenvalue at $\lambda=2$.
Left: the degree $k=5$ GMRES residual polynomial $\pi(z)$ (red) with roots at the black dots; note the steep slope at the root $\theta = 1.9869$ near the eigenvalue $\lambda=2$.  Adding an extra copy of that root leads to the degree $k+1=6$ polynomial (blue) that is small in a larger neighborhood of the root.
Right: the corresponding approximate inverse polynomial $p(z)$, which interpolates $1/z$ (gray line) at the black dots.  The extra root has a similarly tonic effect on this polynomial, which now also interpolates the derivative of $1/\lambda$ at the extra root.
}
\end{figure}

\subsection{Deflation} \label{sec:reviewsub3}

When solving linear systems, \emph{deflation} refers to reducing the influence of small eigenvalues that tend to slow GMRES convergence. Deflation can be implemented by adding approximate eigenvectors to a subspace~\cite{GMRES-E,GMRES-DR}, or by building a preconditioner from eigenvectors; see, e.g., \cite{KhYe,PadeStMaJoMa,SiEmMo}.  Here we will deflate with Galerkin projection~\cite{MultPolyMonteCarlo,GMRES-DR,gproj,SiGa,StOr} (alg.~2 in \cite{MGLE}).  This method can be applied before running a Krylov method, or between cycles of restarted GMRES.\ \ Section~\ref{sec:deflated} has a new use of deflation.

\section{Approximating \boldmath $A^{-1}$ with a polynomial in $A$} \label{sec:poly}

\subsection{Using GMRES to find an approximating polynomial}

Let $\pi$ denote a GMRES residual polynomial
formed after sufficient iterations 
to solve the linear system $Ax=b$ to desired tolerance, and define $p$ via $\pi(z) = 1- z@p(z)$.  
Examples will show that it is possible for $p(A)$ to be a good approximation to $A^{-1}$.  The accuracy of this approximation typically tracks the GMRES residual norm; see Theorem~\ref{thm:normal}.

When a standard preconditioner is available for $A$, we can still get an approximation to $A^{-1}$, but it will not be a polynomial of  $A$ alone.
Looking at the system with standard right preconditioning, where $M$ is an approximation to $A$,
\[ A M^{-1} w = b,  \qquad
\widehat{x} = M^{-1}w, 
\]
we see $r = b-A@\widehat{x} = b - A M^{-1} p(A M^{-1})b = \big(I-A M^{-1} p(A M^{-1})\big)b$, where $p$ comes from GMRES applied to the preconditioned system.  For $\|r\|$ to be a small, $A^{-1}$ should be approximated by $M^{-1}p(A M^{-1})$.  See Examples 6 and 7 for use of this preconditioned polynomial.  

Our first example shows how polynomial approximation of the inverse can work with a matrix from a simple application problem.

{\it Example 1.} 
Consider the standard second-order finite difference discretization of the convection-diffusion equation $- u_{xx} - u_{yy} + 2 u_{x} = f $ on a uniform grid over the unit square $[0,1]\times[0,1]$, with homogeneous Dirichlet boundary conditions.  Grid size $1/50$ gives $A$ of order $n = 2500$.  GMRES is run with a random starting vector (normal entries, scaled to unit norm).  Figure~\ref{fig:polyacc1} compares GMRES convergence to the relative accuracy of the polynomial, $\|A^{-1} - p(A)\| / \|A^{-1}\|$.  Notice that $p(A)$ can accurately approximate $A^{-1}$;  the relative accuracy of $p(A)$ keeps pace with the GMRES residual norm, typically about an order of magnitude behind.  The GMRES residual norm goes below $10^{-12}$ at iteration $k=217$, where $p(A)$ has relative accuracy of $5.1\times 10^{-12}$.

\begin{figure}[b!]
\begin{center}
\includegraphics[width=3.225in]{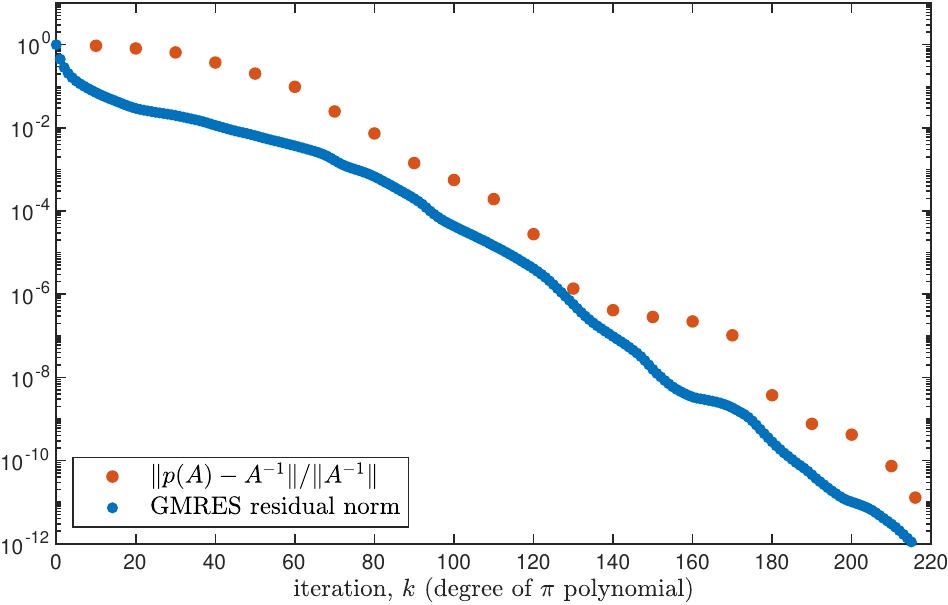}
\end{center}

\vspace*{-6pt}
\caption{\label{fig:polyacc1}
Example 1:  The matrix is size $n = 2500$ from the convection-diffusion equation $- u_{xx} - u_{yy} + 2 u_{x} = f $.  The relative accuracy, $\|A^{-1} - p(A)\| / \|A^{-1}\|$, of the polynomial approximation to the inverse is shown every 10 GMRES iterations, and is compared to the GMRES residual norm.}
\end{figure}

\begin{figure}[t!]
\begin{center}
\includegraphics[width=4.3in]{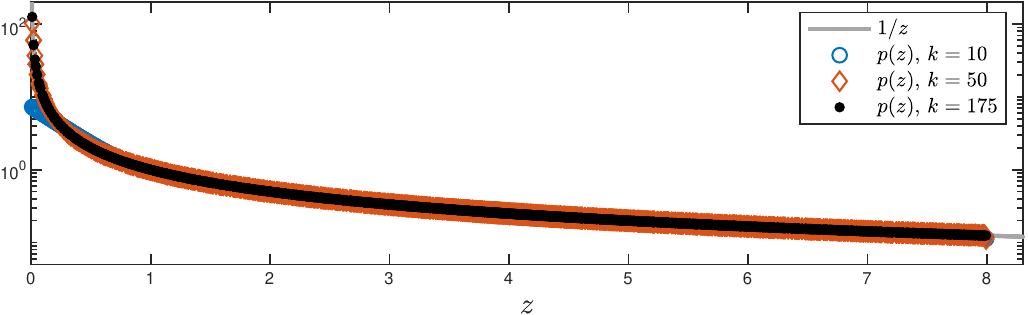}\\[7pt]
\includegraphics[width=4.3in]{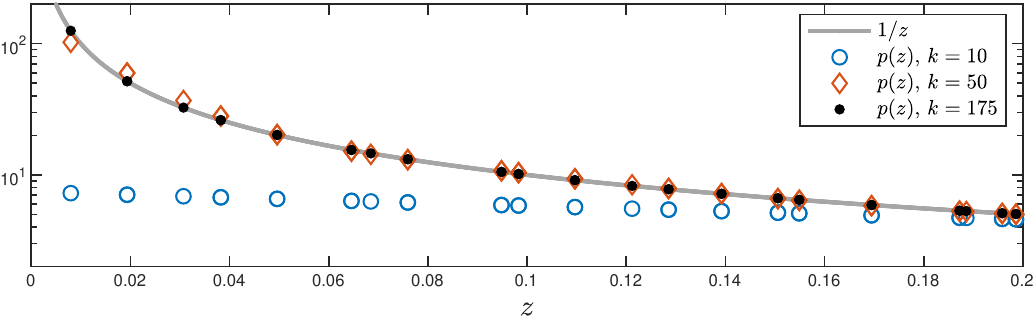}
\end{center}
\begin{picture}(0,0)
\put(190,90){\small\emph{Zoom on left end}}
\put(190,80){\small\emph{of the spectrum}}
\end{picture}
\vspace*{-12pt}
\caption{Convection-diffusion example from Figure~\ref{fig:polyacc1} (dimension $n=2500$), showing how the polynomial $p(z)$ approximates $1/z$ at the eigenvalues of $A$: the gray line shows $1/z$; the markers show $p(\lambda)$ for the eigenvalues $\lambda$ of $A$, for three different degree polynomials $p$ of degree $k-1$.}
\label{fig:polyandrecip}
\end{figure}

\begin{figure}[h!]
\begin{center}
\includegraphics[width=2.35in]{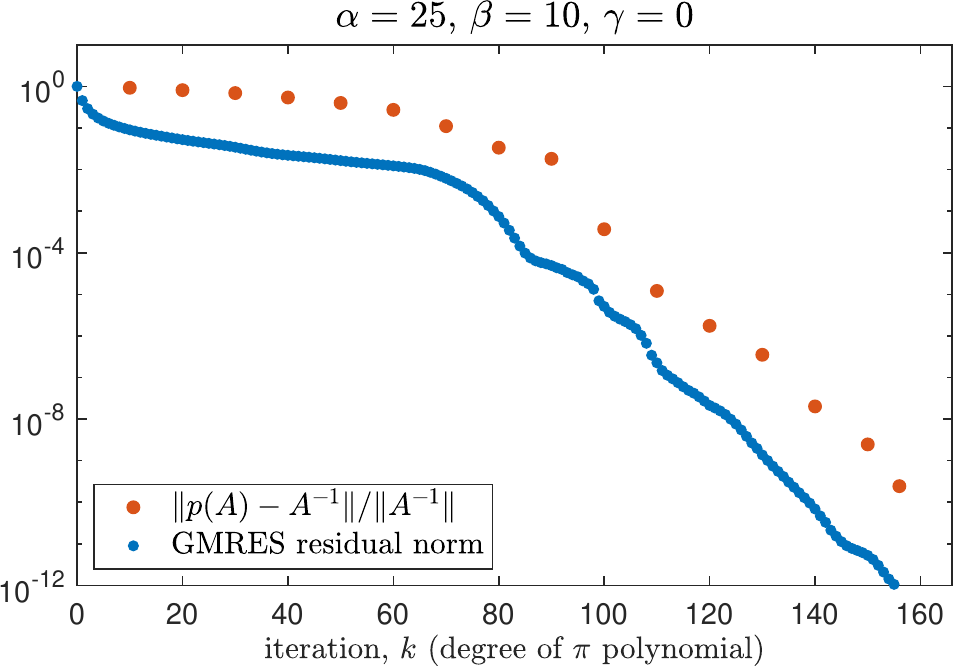}\quad
\includegraphics[width=2.35in]{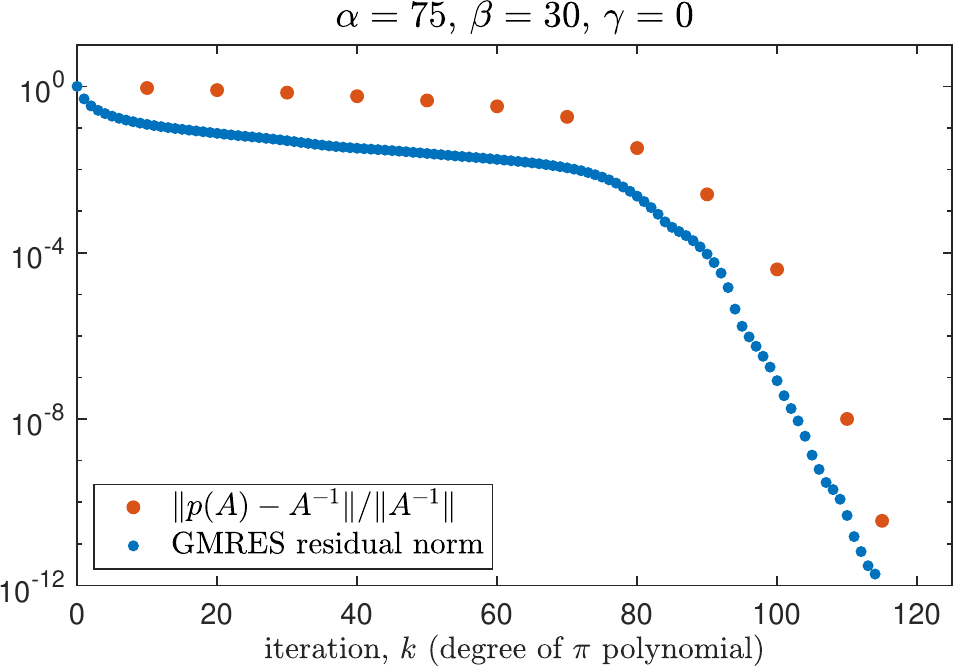}

\vspace*{8pt}
\includegraphics[width=2.35in]{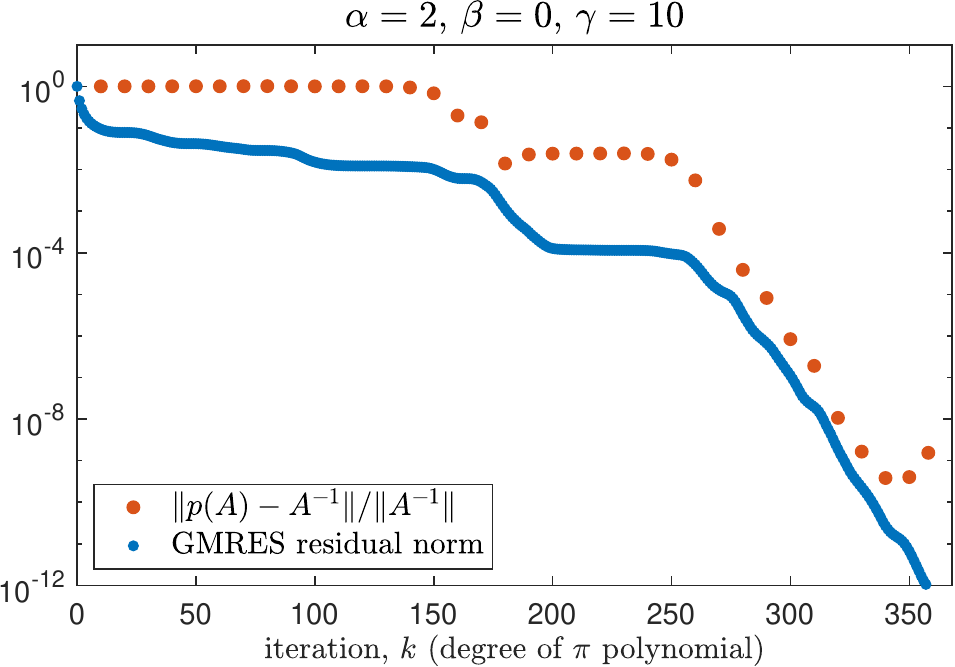}\quad
\includegraphics[width=2.35in]{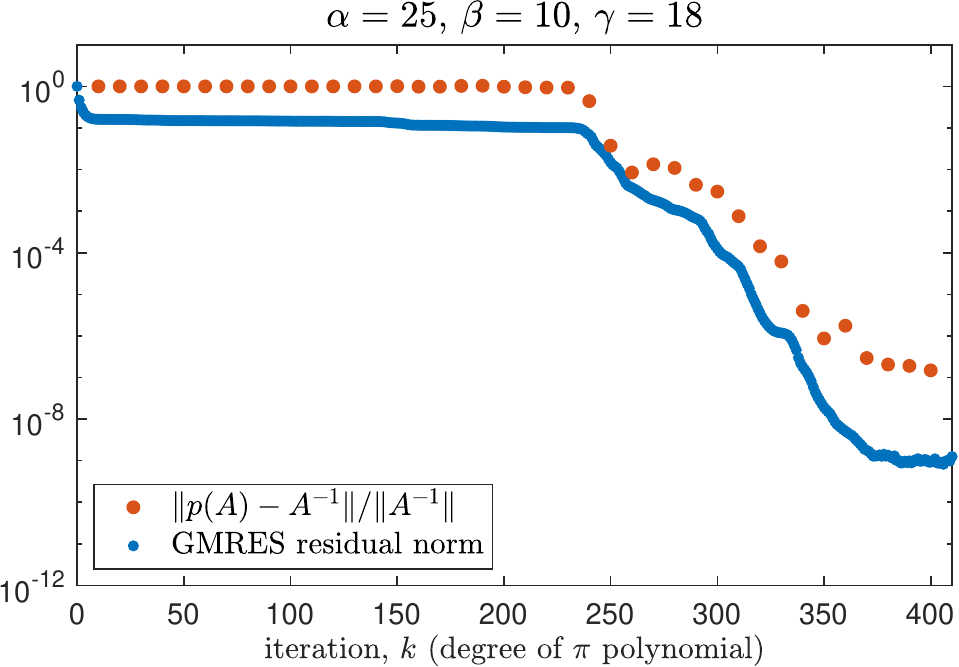}
\end{center}

\vspace*{-5pt}
\caption{Matrix of size $n = 2500$ from the convection-diffusion equation $- u_{xx} - u_{yy} + \alpha u_{x} + \beta u_{y} - \gamma^2 u = f $.  The relative accuracy of $p(A)$ is compared to the GMRES residual for variying degrees of nonnormality and indefiniteness.  (The last few red dots in the bottom plots likely reflect some numerical instabilities.)}
\label{fig:polyacc2}
\end{figure}


Figure \ref{fig:polyandrecip} compares the polynomial $p(z)$  to $f(z) = 1/z$.  For $p(A)$ to approximate $A^{-1}$ well, $p(z)$ must be close to $1/z$ at every eigenvalue of $A$ (made precise later in this section).  At iteration $k=10$, $p$ looks to be a good approximation over the entire spectrum of $A$ (top plot).  However, near the origin (bottom plot) we see that the $k=10$ polynomial does not approximate $1/z$ well at the small eigenvalues.  The $p$ at iteration $k=50$ is better, but still off a bit.  The $p$ at $k=175$ (computed at the point where the GMRES residual hits $10^{-9}$) is accurate at all eigenvalues.

In Figure~\ref{fig:polyacc2} we adjust parameters in the differential equation to make $A$ more nonsymmetric, then also indefinite.  For  $- u_{xx} - u_{yy} + \alpha u_{x} + \beta u_{y} - \gamma^2 u = f $, we first let $\alpha = 25$, $\beta=10$ and $\gamma=0$.  The accuracy with which $p(A)$ approximates $A^{-1}$ (upper-left) follows the GMRES residual norm, though not quite as closely as in Figure~\ref{fig:polyacc1} with a nearly symmetric $A$.\ \ The polynomial remains effective with more nonnormality (upper-right) and with indefiniteness (lower-left).  However, the accuracy is more erratic when the matrix is both significantly nonnormal and indefinite (lower-right).

\subsection{Polynomial approximation accuracy and the GMRES residual}

This first result shows that, for well-conditioned problems, the norm of the GMRES residual can serve as an indicator for the relative error in the polynomial approximation of the inverse:  If the norm of the residual remains large, we cannot expect the corresponding polynomial to approximate the inverse with any accuracy.  

\begin{theorem} \label{thm:general}
Suppose $A\in\Cnn$ is invertible and $Ax=b$ for some unit vector $b\in\Cn$.
Let $\wh{x} = p(A) b$ be an approximation to $x$ for some polynomial $p$, and let $\kappa(A) = \|A\| \|A^{-1}\|$.
Then 
\begin{equation} \label{eq:general}
\frac{\|r\|}{\kappa(A)} 
   \ \le\ \frac{\|x-\wh{x}\|}{\|A^{-1}\|} 
   \ \le\ \frac{\|A^{-1} - p(A)\|}{ \|A^{-1}\|}.
\end{equation}
\end{theorem}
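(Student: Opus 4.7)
The plan is to establish the two inequalities separately, reading the chain from outside in. Both reduce to one-line manipulations once we unpack the definitions of $r$, $\wh{x}$, and $x$, so I expect no real obstacle here; the only subtlety is remembering to use $\|b\|=1$ and to divide by the correct normalizing factor on each side.

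First I would handle the left inequality. Since $x = A^{-1}b$, we have $r = b - A\wh{x} = A(x - \wh{x})$, which gives $\|r\| \le \|A\|\,\|x - \wh{x}\|$ by submultiplicativity. Dividing through by $\|A\|\,\|A^{-1}\| = \kappa(A)$ rearranges exactly to
\[
\frac{\|r\|}{\kappa(A)} \;\le\; \frac{\|x - \wh{x}\|}{\|A^{-1}\|},
\]
which is the left half of \eqref{eq:general}.

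For the right inequality, I would write
\[
x - \wh{x} \;=\; A^{-1}b - p(A)b \;=\; \bigl(A^{-1} - p(A)\bigr)b,
\]
and then take norms, using $\|b\| = 1$, to obtain $\|x - \wh{x}\| \le \|A^{-1} - p(A)\|$. Dividing by $\|A^{-1}\|$ delivers the right half of \eqref{eq:general} and completes the chain.

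The only potentially interesting step is that the right inequality can be strict even for the best $\wh{x}$ of the form $p(A)b$, because $\|A^{-1} - p(A)\|$ is an operator norm (supremum over all unit vectors) while $\|x - \wh{x}\|$ only sees the action on the particular vector $b$. That observation is what makes the theorem meaningful: relative polynomial accuracy dominates the relative forward error, which in turn dominates the scaled residual, so a small residual is a \emph{necessary} condition for $p(A)$ to approximate $A^{-1}$ well when $\kappa(A)$ is modest.
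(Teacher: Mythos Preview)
Your proof is correct and matches the paper's argument essentially line for line: both derive $r = A(x-\wh{x})$ for the left inequality and $x-\wh{x} = (A^{-1}-p(A))b$ with $\|b\|=1$ for the right, then divide by the appropriate normalizing factor. The only difference is the order in which the two inequalities are treated.
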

\begin{proof}
Since $r= b - A \wh{x} = b - A@ p(A) b = A(x-\wh{x})$, we can write
\[ x - \wh{x} \ =\ A^{-1}r\ =\ \big(A^{-1} - p(A)\big)b.\]
Since $\|b\|=1$, we have $\|x-\wh{x}\| \le \|A^{-1}-p(A)\|$.
Divide by $\|A^{-1}\|$ to get the second inequality in~\eqref{eq:general}.  The first follows from 
$\|r\| = \|A(x-\wh{x})\| \le \|A\| \|x-\wh{x}\|$.
\end{proof}

We can also use the residual to get an \emph{upper bound} on the error of the polynomial approximation.  

\begin{theorem} \label{thm:normal}
Let $A\in\Cnn$ be an invertible diagonalizable matrix with $Ax=b$ for some unit vector $b\in\Cn$.
Suppose $\wh{x} = p(A) b$ for some polynomial $p$,
and let $A = Z\Lambda Z^{-1}$ be a diagonalization of $A$ with eigenvectors $z_1, \ldots, z_n$ and corresponding eigenvalues $\lambda_1, \ldots, \lambda_n$.  
Expand $b = Z(Z^{-1}b) = \sum \widehat{\beta}_i z_i$, and define $\beta_i := \widehat{\beta}_i/\|Z^{-1}\|$.  Then for $\kappa(Z) = \|Z\| \|Z^{-1}\|$, we have
\begin{equation} \label{eq:normal}
    \frac{\|A^{-1}-p(A)\|}{\|A^{-1}\|}\ \leq\ \kappa(Z)\, \frac{\|r\|}{\min|\beta_i|}.
\end{equation}
\end{theorem}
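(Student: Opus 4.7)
The plan is to reduce everything to the GMRES residual polynomial $\pi(z)=1-zp(z)$, exploit the diagonalization of $A$, and then control the scalar values $\pi(\lambda_i)$ using the eigenvector expansion of $b$.

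First, I would rewrite the object of interest in terms of $\pi$: since $I-Ap(A)=\pi(A)$, we have $A^{-1}-p(A)=A^{-1}\pi(A)$. Using the diagonalization, this becomes $Z\,\Lambda^{-1}\pi(\Lambda)\,Z^{-1}$, so submultiplicativity gives
\[
\|A^{-1}-p(A)\|\ \le\ \kappa(Z)\,\max_i\frac{|\pi(\lambda_i)|}{|\lambda_i|}.
\]
This splits the task into two pieces: bounding $|\pi(\lambda_i)|$ by $\|r\|$, and absorbing the $1/|\lambda_i|$ factor into the $\|A^{-1}\|$ appearing in the denominator of the claim.

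For the first piece, I would use $r=\pi(A)b$ together with the eigenvector expansion $b=\sum\widehat{\beta}_i z_i$. Applying $\pi(A)$ acts diagonally on the eigenbasis, so $r=\sum\widehat{\beta}_i\pi(\lambda_i)z_i$, i.e., $Z^{-1}r$ has entries $\widehat{\beta}_i\pi(\lambda_i)$. Each coordinate is dominated by the vector's Euclidean norm, giving $|\widehat{\beta}_i\pi(\lambda_i)|\le\|Z^{-1}r\|\le\|Z^{-1}\|\,\|r\|$, hence
\[
|\pi(\lambda_i)|\ \le\ \frac{\|Z^{-1}\|\,\|r\|}{|\widehat{\beta}_i|}.
\]

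For the second piece, I would invoke the fact that the operator norm dominates the spectral radius: $\|A^{-1}\|\ge\max_i 1/|\lambda_i|=1/\min_i|\lambda_i|$, equivalently $\min_i|\lambda_i|\ge 1/\|A^{-1}\|$. Consequently, for the index $i^\star$ attaining $\max_i|\pi(\lambda_i)|/|\lambda_i|$, we have $|\lambda_{i^\star}|\,\|A^{-1}\|\ge 1$, so dividing the earlier bound by $\|A^{-1}\|$ yields
\[
\frac{\|A^{-1}-p(A)\|}{\|A^{-1}\|}\ \le\ \kappa(Z)\,\max_i|\pi(\lambda_i)|.
\]
Combining this with the bound on $|\pi(\lambda_i)|$ and using $\min_i|\widehat{\beta}_i|=\|Z^{-1}\|\min_i|\beta_i|$ collapses the expression to $\kappa(Z)\,\|r\|/\min_i|\beta_i|$, which is the claim.

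I expect the main subtlety to be the second piece, namely the clean cancellation of the $1/|\lambda_i|$ against $\|A^{-1}\|$. The bound $\kappa(Z)\max_i|\pi(\lambda_i)/\lambda_i|$ is stronger than what the theorem states, and one might be tempted to carry $|\lambda_i|$ through the argument; the cleanest path is to note that taking the maximum over $i$ forces $|\lambda_{i^\star}|\ge\min_j|\lambda_j|\ge 1/\|A^{-1}\|$, which is exactly the slack needed to swap out $|\lambda_{i^\star}|\|A^{-1}\|$ for~$1$. The remaining bookkeeping — pulling $\|Z^{-1}\|$ through $|\widehat{\beta}_i|$ to produce $|\beta_i|$ — is routine.
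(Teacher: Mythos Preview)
Your argument is correct. The route differs from the paper's in how the factor $\|A^{-1}\|$ enters. The paper works from the other end: it starts with $\|A^{-1}\|\,\|r\|\ge\|A^{-1}r\|=\|(A^{-1}-p(A))b\|$, diagonalizes to get $\|(\Lambda^{-1}-p(\Lambda))[\beta_1,\ldots,\beta_n]^T\|$ (after absorbing one $\|Z^{-1}\|$), and then uses the elementary fact that for a diagonal matrix $D$ and vector $v$, $\|Dv\|\ge\|D\|\min_i|v_i|$. This keeps $\Lambda^{-1}-p(\Lambda)$ intact throughout and brings in $\|A^{-1}\|$ via the submultiplicative bound $\|A^{-1}r\|\le\|A^{-1}\|\,\|r\|$, so no separate spectral-radius step is needed. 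You instead split $\Lambda^{-1}\pi(\Lambda)$ into its two factors, bound $|\pi(\lambda_i)|$ from the residual, and then cancel $1/|\lambda_i|$ against $\|A^{-1}\|$ using $\|A^{-1}\|\ge\rho(A^{-1})\ge1/|\lambda_i|$. Both approaches are short and rest on the same diagonalization; the paper's is marginally more direct, while yours makes the role of the residual polynomial $\pi$ more explicit and isolates exactly where the $\|A^{-1}\|$ slack is spent.
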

\begin{proof}
Again writing $r = b - A \wh{x} = b - A@p(A)b$, we have
\begin{align*}
    \|A^{-1}\|\|r\|  \geq \|A^{-1}r\| &\ =\ \|(A^{-1}-p(A))b\| \\
    & \ =\  \|Z(\Lambda^{-1} - p(\Lambda)) Z^{-1} b \| \\
    & \ \ge \frac{1}{\|Z^{-1}\|} \|(\Lambda^{-1} - p(\Lambda)) Z^{-1}b \|
    =
    \|(\Lambda^{-1} - p(\Lambda)) [\beta_1\ \cdots\  \beta_n]^T \|.
\end{align*}
Since $\Lambda^{-1}-p(\Lambda)$ is diagonal, we have 
\begin{align*}
 \|(\Lambda^{-1} - p(\Lambda)) [\beta_1\ \cdots\  \beta_n]^T \| \ \ge\  
    \|\Lambda^{-1} - p(\Lambda)\| \min_i |\beta_i|.
\end{align*}
Now since
\[ \|A^{-1} - p(A)\| = \|Z(\Lambda^{-1} - p(\Lambda)) Z^{-1}\| \le \|Z\| \|Z^{-1}\| \|\Lambda^{-1} - p(\Lambda)\|,\]
we can further bound
\[\|\Lambda^{-1} - p(\Lambda)\| \min_i |\beta_i|
    \ge \frac{\|A^{-1} - p(A)\|}{\|Z\| \|Z^{-1}\|} \min_i |\beta_i|.
\]
In summary, we have shown that
\[ \|A^{-1}\|\,\|r\| \ge \frac{\|A^{-1} - p(A)\|}{\|Z\| \|Z^{-1}\|} \min_i |\beta_i|,\]
which can be rearranged to give~\cref{eq:normal}.
\end{proof}

By the bound~\cref{eq:normal}, when $A$ is not far from normal (i.e., when $\|Z\| \|Z^{-1}\|$ is not too large), a small GMRES residual norm implies a small relative error in the $A^{-1}$ approximation, \emph{provided $b$ is not deficient in any eigenvectors.}  To make such deficiencies unlikely, one can take $b$ to have normally distributed random entries.\footnote{For the sake of illustration, suppose $A$ is real symmetric, so $Z\in\R^{n\times n}$ can be taken to be a real unitary matrix.  Thus if $b\in\R^n$ has random Gaussian entries, $Z^*b$ will also have random Gaussian entries~\cite[thm.~1.2.6]{Mui82}.  In this case we can explicitly calculate that the probability of $\min_i |\beta_i| \le \varepsilon$ for any $\varepsilon \in (0,1)$ is $1 - {\rm erfc}(\varepsilon/\sqrt{2})^n$, where ${\rm erfc}(\cdot)$ denotes the complementary error function.  For example, when $\varepsilon=10^{-6}$ and $n=\mbox{10,000}$, this probability is less than $1\%$.}

\subsection{Bounding \boldmath $\|A^{-1} - p(A)\|$}

The quality with which $p(A)$ approximates $A^{-1}$ depends on how well $p(\lambda)$ approximates $1/\lambda$ on the spectrum of $A$, denoted by $\sigma(A)$, or some larger set depending on the departure of $A$ from normality.  
Consider the \emph{numerical range} of $A$, which is the set of all Rayleigh quotients~\cite[chap.~1]{HJ91},
\[ W(A) = \Big\{ \frac{v^* A v}{ v^*v} : 0 \ne v \in \C^n\Big\},\]
and the $\eps$-pseudospectrum of $A$~\cite{TrEm},
\[ \sigma_\eps(A) = \Big\{ z \in \C: \|(zI-A)^{-1}\| > 1/\eps\Big\}.
\]

The following proposition follows from conventional bounds on functions of matrices, and all have analogs for bounding the residual polynomial in GMRES~\cite{Emb22}.

\begin{proposition} \label{prop:approx_inv}
Suppose $A\in\C^{n\times n}$ is invertible, and let $p$ be any polynomial.
\begin{enumerate}
\item  If $A$ is diagonalizable, $A=Z \Lambda Z^{-1}$, then with $\kappa(Z) = \|Z\| \|Z^{-1}\|$, 
\[ \|A^{-1} - p(A)\| \le \kappa(Z) \max_{\lambda \in \sigma(A)} \Big|\frac{1}{\lambda} - p(\lambda)\Big|.
\]
\item If $0 \not \in W(A)$, then, by the Crouzeix--Palencia theorem~\cite{CP17},
\[ \|A^{-1} - p(A)\| \le (1+\sqrt{2}) \max_{\lambda \in W(A)} \Big|\frac{1}{\lambda} - p(\lambda)\Big|.
\]
\item If $\eps>0$ is sufficiently small that $0 \not \in \sigma_\eps(A)$, then
\[ \|A^{-1} - p(A)\| \le \frac{L_\eps}{2\pi\eps} \max_{\lambda \in \sigma_\eps(A)} \Big|\frac{1}{\lambda} - p(\lambda)\Big|,
\]
where $L_\eps$ denotes the length of the boundary of $\sigma_\eps(A)$.
\end{enumerate}
\end{proposition}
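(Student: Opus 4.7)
The three parts are standard consequences of matrix function estimates applied to the scalar function $f(z) = 1/z - p(z)$, so the plan is to verify that each framework applies, observing that the hypothesis on $0$ in each case guarantees $f$ is analytic on the relevant region.

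For part (1), I would start from the diagonalization $A = Z\Lambda Z^{-1}$, which gives $A^{-1} - p(A) = Z(\Lambda^{-1}-p(\Lambda))Z^{-1}$ since both $A^{-1}$ and $p(A)$ commute with $Z$ in the obvious way. Taking operator norms and pulling $\|Z\|\|Z^{-1}\|=\kappa(Z)$ outside leaves $\|\Lambda^{-1}-p(\Lambda)\|$, which for a diagonal matrix equals $\max_{\lambda\in\sigma(A)}|1/\lambda - p(\lambda)|$. This is essentially the same manipulation used in the proof of Theorem~\ref{thm:normal}.

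For part (2), the plan is to apply the Crouzeix--Palencia theorem, which states that for any function $f$ analytic on a neighborhood of $W(A)$, $\|f(A)\| \le (1+\sqrt{2})\max_{z\in W(A)}|f(z)|$. The function $f(z) = 1/z - p(z)$ has a single pole at $z=0$, and since $0\notin W(A)$ (which is compact), $f$ is analytic on an open neighborhood of $W(A)$. Applying the theorem directly yields the bound.

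For part (3), the approach is via the Dunford--Cauchy integral. Since $0 \notin \sigma_\eps(A)$, and $\sigma(A) \subseteq \sigma_\eps(A)$, the function $f(z) = 1/z-p(z)$ is analytic on a neighborhood of $\sigma_\eps(A)$, so $f(A) = \frac{1}{2\pi i}\oint_{\partial \sigma_\eps(A)} f(z)(zI-A)^{-1}\,dz$. Taking norms, bounding $\|(zI-A)^{-1}\| \le 1/\eps$ on $\partial\sigma_\eps(A)$ (by definition of the pseudospectrum, with the inequality on the boundary), and using the arclength $L_\eps$ to bound the contour integral yields the stated inequality.

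None of these arguments present a serious obstacle; the only point requiring minor care is verifying analyticity of $1/z - p(z)$ on each region, which in every case reduces to the hypothesis that $0$ avoids the relevant set. If I wanted to be careful about part (3), I would note that $\partial\sigma_\eps(A)$ may not be a single smooth curve, but the estimate $\|(zI-A)^{-1}\|\le 1/\eps$ on the boundary and the definition of $L_\eps$ as total boundary length make the contour-integral bound go through without change.
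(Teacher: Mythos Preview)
Your proposal is correct and is precisely the ``conventional bounds on functions of matrices'' approach the paper alludes to; the paper does not actually give a proof of this proposition, merely stating that it follows from standard matrix-function estimates with analogs for GMRES residual bounds. Your three arguments (diagonalization, Crouzeix--Palencia, Cauchy integral over the pseudospectral boundary) are exactly the expected ones, and your care about analyticity of $1/z - p(z)$ on each region is the right point to check.
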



\subsection{Harmonic Ritz values} \label{sec:hritz}
The roots of $\pi(z) = 1 - z p(z)$ are precisely the points where $p(z)$ interpolates $1/z$.  Thus in light of Proposition~\ref{prop:approx_inv}, to understand how well $p(A)$ approximates $A^{-1}$ one naturally asks, \emph{Where are the roots of $\pi$ located?} 

At step $k$, GMRES computes the approximation $\wh{x}$ from the Krylov subspace 
\[\CK_{k}(A,b) 
    \ =\ {\rm span}\{b, Ab, \ldots, A^{k-1}b\}
    \ =\ \{ \psi(A)@b: {\rm deg}(\psi) < k\}.
\]
Conventional implementations of GMRES progressively
build an orthonormal basis $\{v_1, \ldots, v_k\}$
for $\CK_k(A,b)$.
This process is compactly summarized in the expression
\begin{equation} \label{eq:arnoldi}
A V_k = V_{k+1} H_{k+1,k} = V_k H_{k,k} + h_{k+1,k} v_{k+1} e_k^*,
\end{equation}
where the $(k+1)\times k$ upper Hessenberg matrix $H_{k+1,k}$ 
(and its top $k\times k$ block, $H_{k,k}$) collects the coefficients from the orthogonalization process.  Premultiplying~\eqref{eq:arnoldi} by $V_k^*$ gives $H_{k,k} = V_k^*AV_k^{}$.
(The eigenvalues of $H_{k,k} \in \C^{k\times k}$ are called \emph{Ritz values};
they are Rayleigh--Ritz eigenvalue estimates for $A$ from the Krylov subspace 
$\CK_k(A,b)$.)

The GMRES approximation $\wh{x}\in\CK_k(A,b)$ is selected to minimize the 2-norm of the residual, $\|r\| = \|b - A \wh{x}\|$, and so basic least squares theory implies that the residual $r = b - A\wh{x}$ is orthogonal to the space $A \CK_k(A,b)$ from which $b$ is approximated.  Using this property, one can show~\cite{Cao97,MO94} that each root $\theta$ of $\pi$ must satisfy the generalized eigenvalue problem 
\begin{equation} \label{eq:hritz_gep0}
H_{k+1,k}^*H_{k+1,k}^{}\,c = \theta H_{k,k}^* c, \rlap{$\qquad$ \mbox{for nonzero $c\in\C^k$.}}
\end{equation}

Since the coefficient matrices $H_{k+1,k}^*H_{k+1,k}^{}$ and $H_{k,k}^*$ are $k\times k$, this generalized eigenvalue problem can have up to $k$ finite eigenvalues, the roots of $\pi$.  When $H_{k,k}$ is singular the generalized eigenvalue problem can have fewer than $k$ finite eigenvalues; this case corresponds to the stagnation of GMRES at step $k$. 
When $H_{k,k}$ is invertible, (\ref{eq:hritz_gep0}) can be reduced to the standard eigenvalue problem
\begin{equation} \label{eq:hritz_gep}
\Big(H_{k,k} + |h_{k+1,k}|^2 f_k^{} e_k^*\Big) c = \theta c,
\rlap{\qquad $f_k := H_{k,k}^{-*} e_k$.}
\end{equation}
The roots of $\pi$, characterized by~\eqref{eq:hritz_gep0}, are called \emph{harmonic Ritz values}~\cite{Fr92,IE,IEN,PaPavdV}, since \emph{$1/\theta$ is a Rayleigh--Ritz eigenvalue estimate for $A^{-1}$ from the subspace $A\CK_k(A,b) = {\rm range}(AV_k)$}.
Indeed, one can show that
\[ (AV_k)^* A^{-1} (A V_k) c = \frac{\,1\,}{\theta} (AV_k)^*(AV_k) c,\]
 a generalized Rayleigh quotient for $A^{-1}$:
Unlike standard Ritz values (the eigenvalues of $H_{k,k}$), \emph{harmonic Ritz values are not shift invariant}: their location depends on the proximity of the spectrum of $A$ to the origin.
Since the reciprocals of the harmonic Ritz values $\{\theta_j\}_{j=1}^k$ are Ritz values for $A^{-1}$, we have $1/\theta_j \in W(A^{-1})$, that is, 
\[ \theta_j \in \{ 1/z: z \in W(A^{-1})\} 
   = \Big\{ \frac{v^*v}{v^*A^{-1} v} : 0 \ne v \in \Cn\Big\}.\]
Look ahead to Figure~\ref{fig:rvhrvex} to see an example of $1/W(A^{-1})$.
This set contains $z=\infty$ when $0 \in W(A^{-1})$, which implies $0\in W(A)$ and thus the ability of the first step of GMRES to stagnate.\ \ 
(The potential for stagnation at later iterations can be described via higher-dimensional generalizations of the numerical range~\cite[Theorem~2.7]{FJKM96}.)

\begin{figure}[t!]
\begin{center}
\includegraphics[width=2.4in]{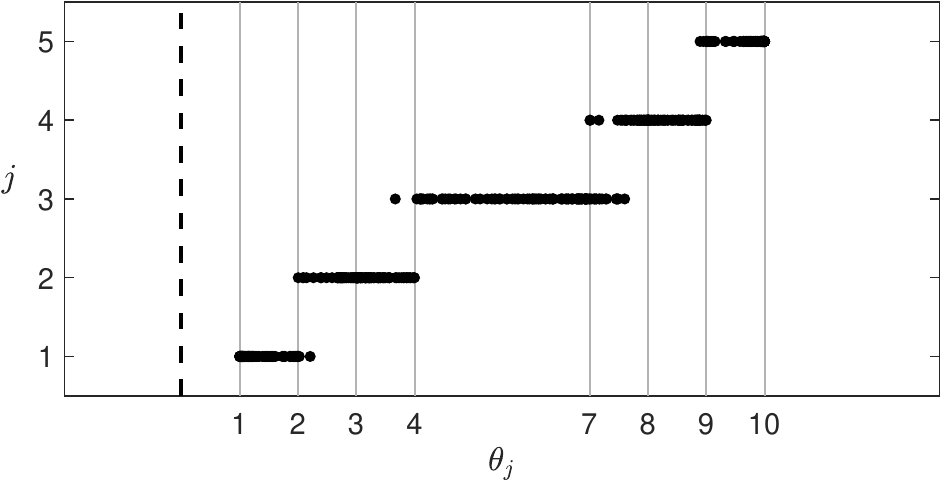}\quad
\includegraphics[width=2.4in]{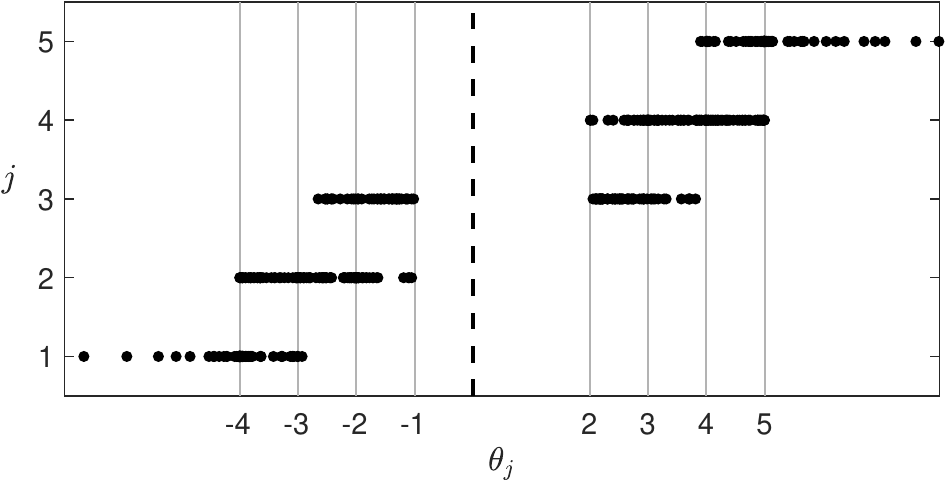}

\vspace*{3pt}
\hspace*{2pt}
\includegraphics[width=2.36in]{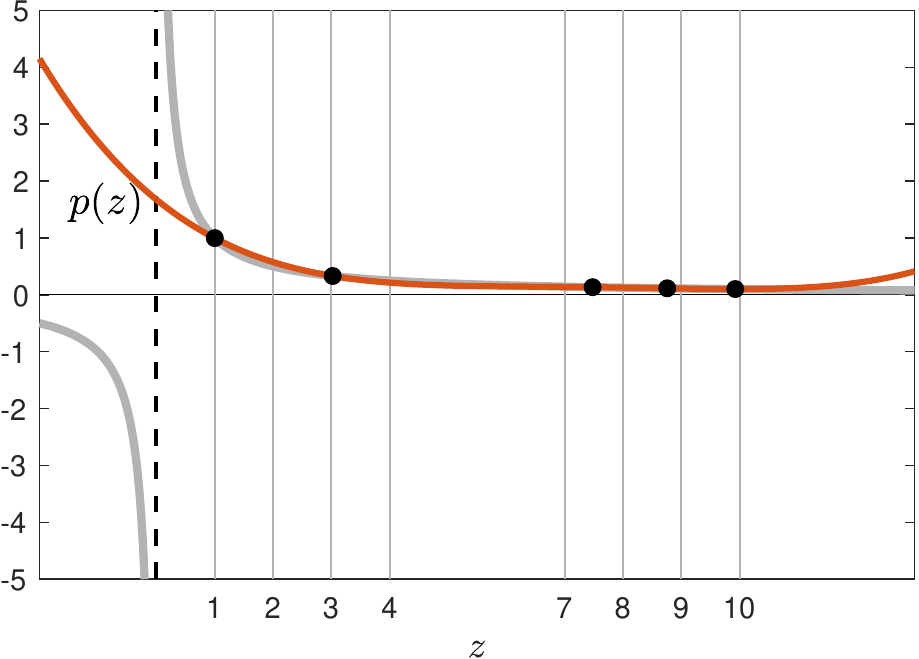}\hspace{9pt}
\includegraphics[width=2.36in]{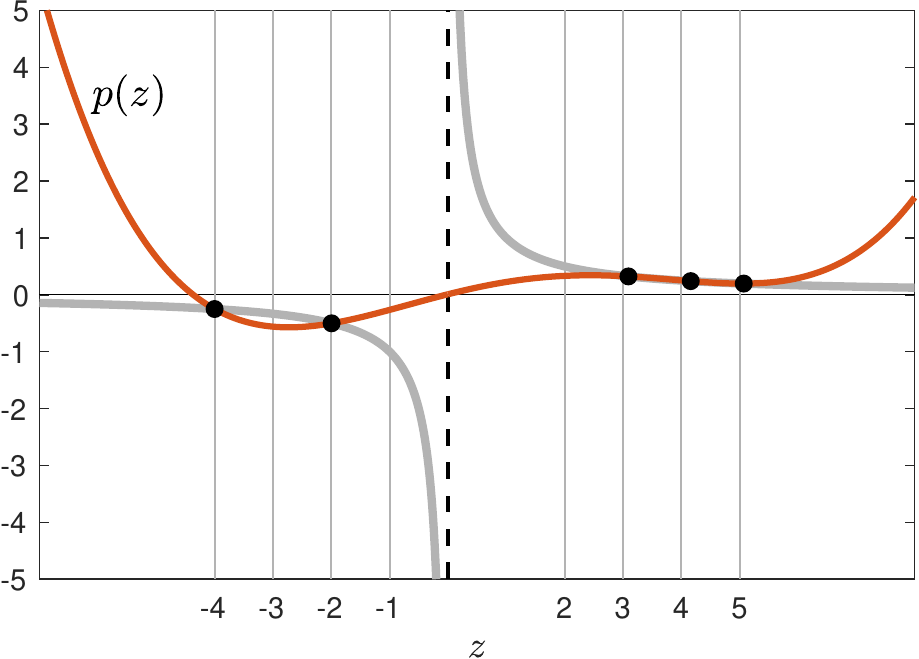}
\end{center}

\vspace{-6pt}
\caption{\label{fig:hritz_plots}
The top plots show the harmonic Ritz values from 100 trials involving random Krylov subspaces of dimension $k=5$ for Hermitian matrices of dimension $n=8$, sorted $\theta_1 \le \theta_2 \le \cdots \le \theta_5$.  Eigenvalues are marked by gray vertical lines; the origin is denoted by the vertical black dashed line.  The positive definite matrix on the left has eigenvalues $\{1,2,3,4,7,8,9,10\}$, and the harmonic Ritz values obey Cauchy interlacing.  The indefinite problem on the right has the same eigenvalues, only shifted left: $\{-4,-3,-2,-1, 2,3,4,5\}$.  Notice the absence of harmonic Ritz values in the interval $(-1,2)$; numerous $\theta_j$ values are beyond the axis limits for this case. The two plots would be identical (up to the shift) if they showed standard Ritz values, which are shift invariant.  The bottom plots show how $p(z)$ (red line) interpolates $1/z$ (gray line) for one of the trials.  Notice that approximating $1/z$ at eigenvalues on both sides of the origin (indefinite problem on the right) is much more challenging.}
\end{figure}

The case of Hermitian $A$ already shows the subtle nature of harmonic Ritz values.  If $A$ is positive definite with eigenvalues $0 < \lambda_1 \le \lambda_2 \le \cdots \le \lambda_n$,  the harmonic Ritz values obey Cauchy interlacing~(see, e.g., \cite{Par98}):  
if $\theta_1 \le \theta_2 \le \cdots \le \theta_k$, then 
\[ \lambda_j \le \theta_j \le \lambda_{n-j+k}, \rlap{\qquad $j=1,\ldots,k$.}\]
In contrast, if $A$ is indefinite (and thus $0 \in W(A)$) with eigenvalues
\[ \lambda_{-m} \le \cdots \le \lambda_{-1} < 0 < \lambda_1 \le \cdots \le \lambda_p,\]
one can say
\[ \theta_j \in (-\infty, \lambda_{-1}] \cup [\lambda_1,\infty), \rlap{\qquad $j=1,\ldots, k$.} \]
Thus \emph{no} harmonic Ritz values call fall in the interval $(\lambda_{-1},\lambda_1)$ containing the origin.
Figure~\ref{fig:hritz_plots} gives a simple illustration contrasting the positive definite and indefinite cases.
For a more detailed discussion of the Hermitian case, see~\cite{Bea98}.

The following result bounds the magnitudes of  harmonic Ritz values via geometric means of the trailing singular values of $A$.\ \ It follows by applying an upper bound for Ritz values~\cite[Theorem~2.3]{CE12} to bound $1/\theta_j$ as a Ritz value of $A^{-1}$.
\begin{theorem}
Suppose $A\in\C^{n\times n}$ and let $\{\theta_j\}_{j=1}^k$ denote the roots of the degree-$k$ GMRES residual polynomial, ordered by increasing magnitude: $|\theta_1| \le |\theta_2| \le \cdots \le |\theta_k|$ (allowing for the possibility that some $\theta_j = \infty$).  Then
\[ |\theta_j| \ge \big(s_n \cdots s_{n-j+1}\big)^{1/j}, \rlap{\qquad $j = 1,\ldots, k,$}\]
where $s_1 \ge s_2 \ge \cdots \ge s_n$ denote the singular values of $A$.
\end{theorem}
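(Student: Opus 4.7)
The plan is to reduce the claim to a known upper bound on the moduli of Ritz values, applied to $A^{-1}$ viewed on the subspace $A@\CK_k(A,b) = \mathrm{range}(AV_k)$. As noted just before the theorem, the reciprocals $\{1/\theta_j\}_{j=1}^k$ are precisely the Ritz values for $A^{-1}$ drawn from that subspace, so ordering the $\theta_j$ by increasing modulus corresponds to ordering the Ritz values of $A^{-1}$ by decreasing modulus. Thus anything we can say about the moduli of Ritz values for a general $n \times n$ matrix translates directly into a statement about the $\theta_j$.

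First, I would invoke the Weyl-type bound from Carden--Embree (their Theorem~2.3): if $\mu_1,\ldots,\mu_k$ are the Ritz values of some matrix $B\in\C^{n\times n}$ drawn from any $k$-dimensional subspace, ordered so that $|\mu_1|\ge\cdots\ge|\mu_k|$, and if $s_1(B)\ge\cdots\ge s_n(B)$ are the singular values of $B$, then
\[ \prod_{i=1}^{j}|\mu_i| \ \le\ \prod_{i=1}^{j} s_i(B), \qquad j=1,\ldots,k. \]
In particular, since $|\mu_j|^j \le \prod_{i=1}^{j}|\mu_i|$, we get the clean geometric-mean bound $|\mu_j| \le \bigl(\prod_{i=1}^{j} s_i(B)\bigr)^{1/j}$.

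Next, I would specialize to $B = A^{-1}$. The singular values of $A^{-1}$ are the reciprocals of those of $A$ in reversed order, so $s_i(A^{-1}) = 1/s_{n-i+1}(A)$. With the ordering $|\theta_1|\le\cdots\le|\theta_k|$ we have $|1/\theta_1|\ge\cdots\ge|1/\theta_k|$, and substituting $\mu_i = 1/\theta_i$ into the Ritz-value bound yields
\[ \Bigl|\tfrac{1}{\theta_j}\Bigr|
   \ \le\ \left(\prod_{i=1}^{j}\frac{1}{s_{n-i+1}(A)}\right)^{\!1/j}
   \ =\ \frac{1}{\bigl(s_n s_{n-1}\cdots s_{n-j+1}\bigr)^{1/j}}. \]
Taking reciprocals gives exactly the claimed lower bound on $|\theta_j|$.

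The one mild subtlety is the possibility that $H_{k,k}$ is singular, so that some $\theta_j = \infty$; in that case the corresponding Ritz value of $A^{-1}$ is $0$, the Ritz-value bound holds trivially (both sides nonnegative with left-hand side zero), and the conclusion $|\theta_j|\ge(\cdots)^{1/j}$ is vacuously true. The main obstacle is really just a careful bookkeeping check that the singular-value ordering flips correctly under inversion and that the Carden--Embree bound applies to the non-Hermitian matrix $A^{-1}$ on the particular Krylov-type subspace $A@\CK_k(A,b)$; once that is in place, the rest is algebra.
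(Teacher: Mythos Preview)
Your proposal is correct and follows exactly the approach the paper indicates: apply the Carden--Embree Ritz-value bound \cite[Theorem~2.3]{CE12} to $A^{-1}$ on the subspace $A\CK_k(A,b)$, then invert. You have simply filled in the bookkeeping (the singular-value reversal under inversion, the $|\mu_j|^j\le\prod_{i\le j}|\mu_i|$ step, and the $\theta_j=\infty$ case) that the paper leaves implicit in its one-sentence justification.
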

 The $k=1$ case of this theorem ensures that no harmonic Ritz values can fall strictly inside the disk of radius $s_n = 1/\|A^{-1}\|$ centered at the origin.  For larger $k$ the bounds quantify the notion that \emph{there cannot be more small magnitude harmonic Ritz values than the number of small singular values of $A$}.\ \ The development of finer containment regions for harmonic Ritz values is an interesting but difficult problem.  When coupled with Proposition~\ref{prop:approx_inv}, such results could illuminate how well  $p(A)$  approximates $A^{-1}$.

\begin{figure}[t!]
\begin{center}
\includegraphics[width=2.15in]{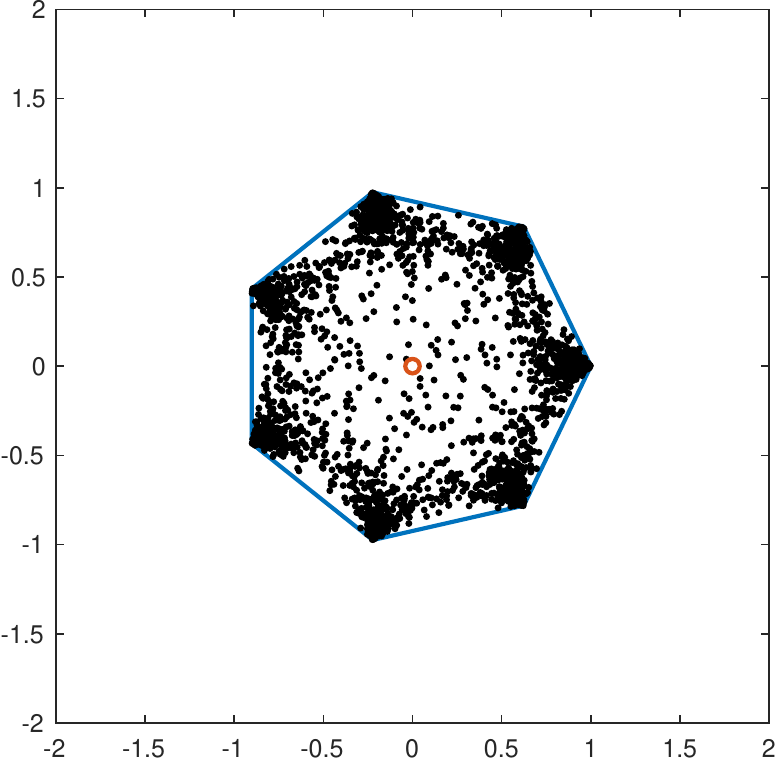}\qquad
\includegraphics[width=2.15in]{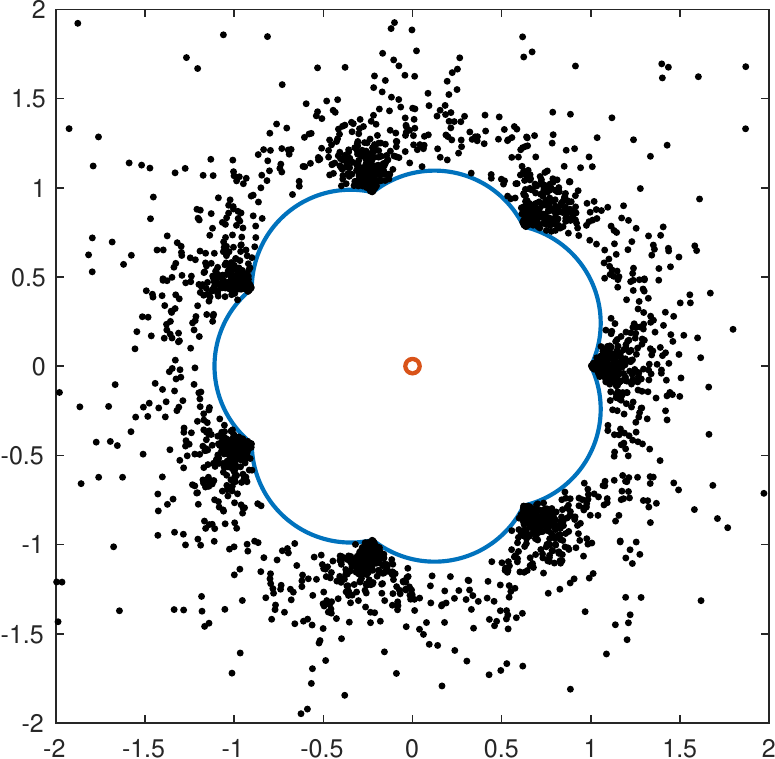}
\end{center}
\vspace*{-5pt}
\caption{\label{fig:rvhrvex}
Ritz values (left) and harmonic Ritz values (right) 
from 5-dimensional random complex Krylov subspaces of the circulant shift ($n=7$),  500~trials.  (83 of the 2500 harmonic Ritz values fall beyond the axes.)  The origin is the red circle; the blue lines show the boundaries of $W(A)$ (left -- an exterior bound on Ritz values) and $1/W(A^{-1})$ (right -- an interior bound on harmonic Ritz values).  }
\end{figure}

Figure~\ref{fig:rvhrvex} contrasts standard Ritz values (eigenvalues of $H_{k,k}$) with harmonic Ritz values (eigenvalues of $H_{k,k} + |h_{k+1,k}|^2 f_k^{} e_k^*$) when $A$ is the circulant shift matrix of order $n=7$ (ones on the superdiagonal and bottom-left entry; zeros elsewhere), a unitary matrix that is notoriously difficult for GMRES~\cite{Bro91}.  The Ritz values all fall in $W(A)$, in this case the convex hull of the spectrum, located 
\emph{inside the unit disk}.  In contrast, the harmonic Ritz values fall in $1/W(A^{-1})$, a nonconvex set \emph{exterior to the unit disk} (except at the eigenvalues).

\subsection{Further tests}

\

{\it Example 2.} 
We explore how the accuracy of the polynomial approximation of $A^{-1}$ obtained from GMRES depends on $b$.
Returning to the convection--diffusion example $- u_{xx} - u_{yy} + \alpha u_{x} + \beta u_y = f $,   we revisit the experiments shown in Figure~\ref{fig:polyacc1}
($\alpha=2$, $\beta=0$: mild departure from normality) and the top-left part of Figure~\ref{fig:polyacc2} ($\alpha=25$, $\beta=10$: significant departure from normality).  
In each case we run 100~experiments with random $b$ vectors (real, normally distributed entries, scaled to be unit vectors) and compare GMRES performance to the accuracy of the corresponding polynomial approximation of the inverse at every iteration (obtained from the GMRES polynomial).  Figure~\ref{fig:loghist} shows the results as colors whose intensities reveal the frequency values out of the 100~experiments: shades of blue show the very consistent behavior of the GMRES residual norm; shades of red show $\|p(A)-A^{-1}\|/\|A^{-1}\|$, which tracks the GMRES residual norm closely but shows a bit more variation.  The color bars on the right of the figure reveal the frequency; summing up values for any iteration number $k$ (i.e., vertically in each plot) will yield~100, the total number of experiments.  The histograms barely overlap (with the red histogram drawn on top of the blue), as can be seen by the solid blue and red lines that show the extreme behavior of the 100~experiments.

\begin{figure}
\begin{center}
   \includegraphics[height=1.8in]{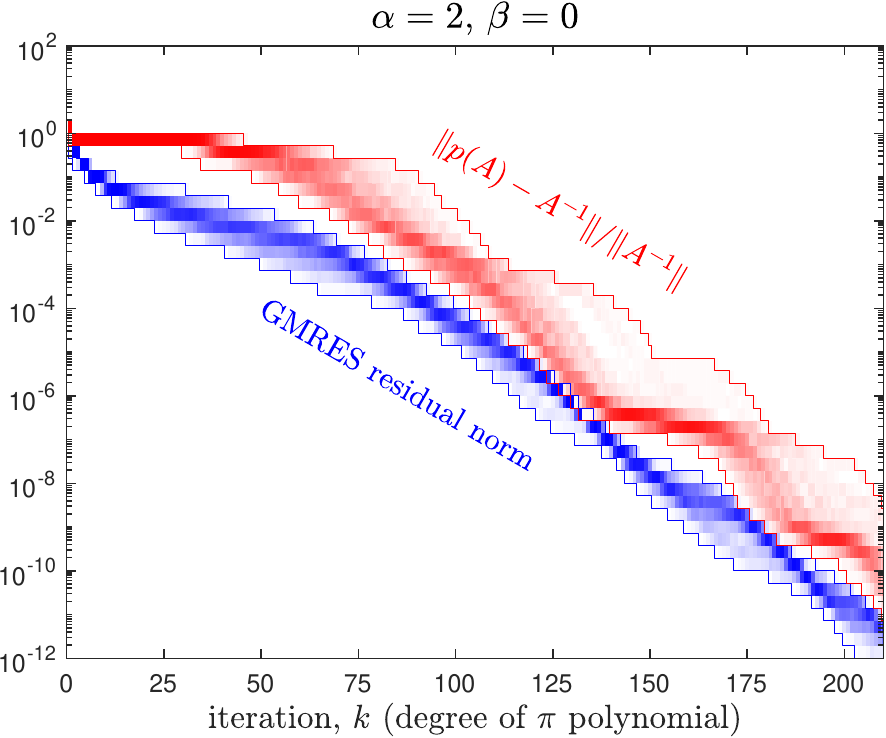}\quad
   \includegraphics[height=1.8in]{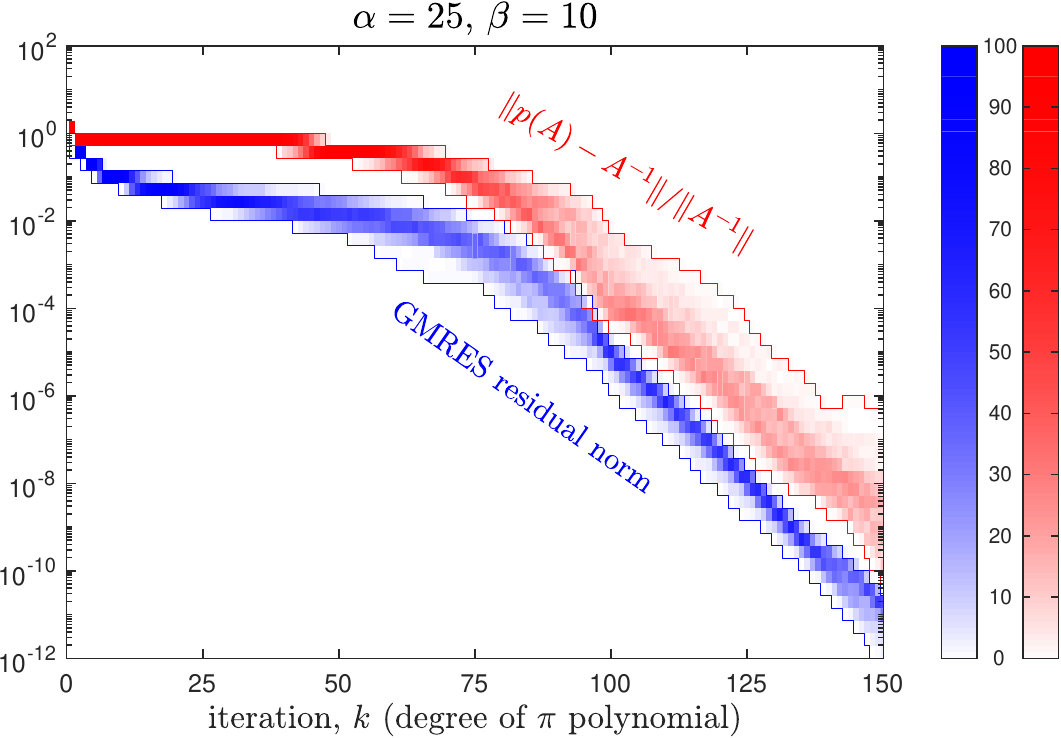}
\end{center}

\vspace*{-5pt}
\caption{Repetitions of Figure~\ref{fig:polyacc1} (left) and the top-left part of Figure~\ref{fig:polyacc2} (right) based on discretizations of the convection--diffusion equation $- u_{xx} - u_{yy} + \alpha u_{x} + \beta u_y = f $ of size $n=2500$.  The colors show the frequency with which the GMRES residual norm (blue) and polynomial inverse approximation (red) reached a given tolerance at iteration $k$, out of 100 experiments with random $b$ vectors.  (The dark blue and red lines show the envelopes of these barely overlapping histograms.)}
\label{fig:loghist}
\end{figure}


{\it Example 3.}  We examine the effect of ill-conditioning on the accuracy of the polynomial approximation of $A^{-1}$.\ \  To focus on this effect, we take $A$ to be a positive definite diagonal matrix with no outstanding eigenvalues.  (No extra stabilization roots are required in this experiment.)  For dimension $n=2501$ we set
\[ \widehat{A} = {\rm diag}(1^p, 2^p, \ldots, 1251^p, 2\cdot (1251^p)-1250^p, 2\cdot (1251^p)-1249^p, \ldots, 2\cdot(1251^p) - 1^p)\]
and scale $A = \widehat{A}/(1250.5)^{p-1}$.
As the parameter $p\ge 0$ increases, the conditioning grows and the eigenvalues evolve from clustered in the middle of the spectrum ($p<1$) to clustered at both ends ($p>1$).
We take 20~values of $p\in [0.55,1.8]$, in each case running GMRES with the same $b$ (a random normal vector, scaled to unit norm).

This example shows how the degree of the polynomial varies with the condition number, and that a high degree polynomial can be effective.  
Figure~\ref{fig:polydegr} shows the degree needed for full GMRES to converge to residual norm below $10^{-10}$.  
The relative accuracy of $p(A)$ ranges from a maximum of $4.4\times 10^{-9}$ for $p\approx 0.616$ to a minimum of $3.6\times 10^{-11}$ for $p=1.8$.
Even with ill-conditioned problems, $p(A)$ can accurately approximate $A^{-1}$, though the degree of $p$ grows with the ill-conditioning.

\begin{figure}
\begin{center}
\includegraphics[width=3.75in]{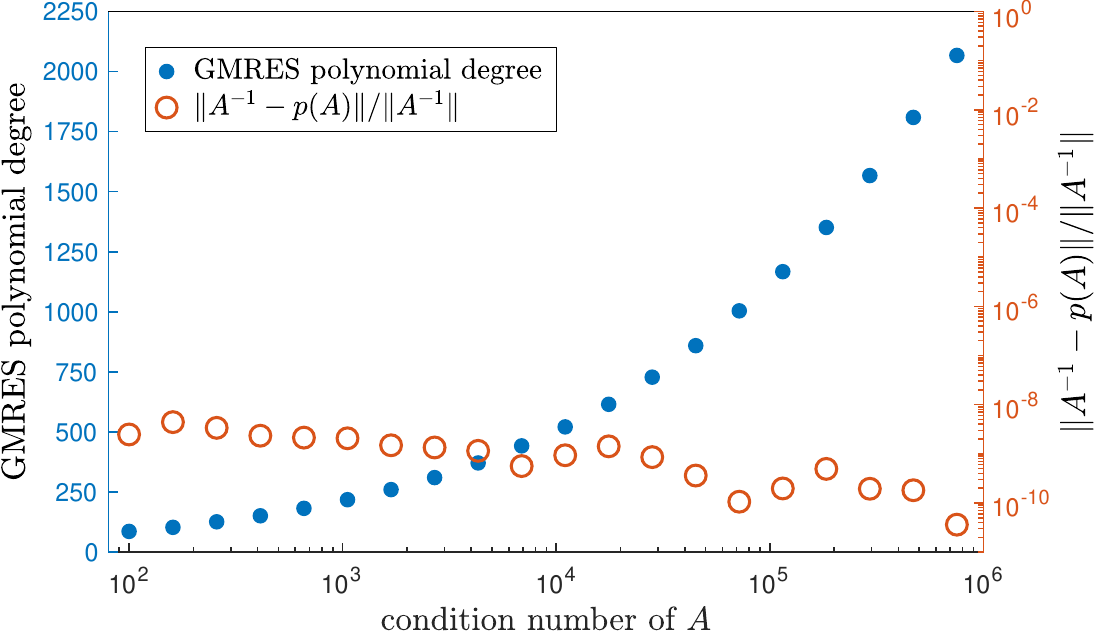}
\end{center}

\vspace*{-5pt}
\caption{Example~3:  diagonal matrices $(n=2501)$ with no outstanding eigenvalues.  The degree of the GMRES polynomial when the residual goes below $10^{-10}$ and the corresponding relative error in the polynomial approximation of $A^{-1}$ is plotted versus the condition number.}
\label{fig:polydegr}
\end{figure}

\section{Alternative methods for building the polynomial} \label{sec:alt}

Finding the polynomial $p$ that approximates $A^{-1}$ can be expensive due to the need to run full (non-restarted) GMRES, possibly for many iterations.  The orthogonalization cost and the storage increase as the iteration proceeds.  We discuss three ways of reducing this cost: generating the polynomial $p$ from restarted GMRES, a double polynomial formed by polynomial preconditioned GMRES, and the nonsymmetric Lanczos process.  

\subsection{Restarted GMRES}

The restarted GMRES algorithm~\cite{SaSc} can significantly reduce the orthogonalization cost, restricting the GMRES optimization to Krylov subspaces of fixed dimension $m\ll n$.  However, restarted GMRES often takes more iterations than full GMRES, so a higher degree polynomial is needed.

When GMRES($m$) is run for $c$ cycles, the overall residual polynomial is a product of the residual polynomials for each cycle, $\pi(z) = \pi_1(z)\cdots \pi_c(z)$, where each $\pi_j$ is a degree-$m$ polynomial. 
The roots of $\pi(z)$ are all the roots of the residual polynomials (harmonic Ritz values) from each GMRES cycle put together. 
The polynomial $p$ is defined via $\pi(z) = 1 - zp(z)$.  Then $p(A)$ can be multiplied against a vector using~\cite[alg.~3]{PPGStable} with the list of roots from the overall $\pi$. 
When each restart occurs at the end of a cycle of $m$ iterations, $m$ of these roots become ``locked in,'' and will henceforth be roots of the $\pi$ that is built up over future cycles.  (In contrast, for full GMRES \emph{all} roots of $\pi$ typically change at each iteration.)  The harmonic Ritz values may nearly recur in cyclic patterns across restarted GMRES cycles~\cite{BaJeMa,CC}, which could limit the effectiveness of this approach to designing polynomial approximations to the inverse.  (In the ``hybrid GMRES'' approach~\cite{NaReTr},  $m$ iterations of GMRES are run, and the polynomial is constructed from $\pi(z) = (1-z/\theta_1)^c \cdots (1-z/\theta_m)^c$.  The resulting approximation $p(z)$ will interpolate $1/z$ and its first $c-1$ derivatives at $\theta_1, \ldots, \theta_m$.)

\subsection{Double polynomials}
An alternative approach to building high-degree polynomials while controlling orthogonalization costs uses the composition of two polynomials, as generated by polynomial preconditioned GMRES.\ \ 
We call such an approximation to the inverse a \emph{double polynomial}~\cite{PPArn,PPGStable}. The double polynomial often needs to be of higher degree than a single GMRES polynomial to deliver the same accuracy, but it can be of lower degree than the restarted GMRES polynomial.  In general, the double polynomial makes approximation of $A^{-1}$  much more practical.  

We describe how to build the double polynomial from polynomial preconditioned GMRES (Subsection~\ref{ssec:ppgmres}).  First, select the degree of the $\phi$ polynomial in polynomial preconditioned GMRES.\ \  We regard this as an inner polynomial, so we call it $\phi_{\rm in}$ and its degree $\dphiin$.  Run GMRES on $A$ for $\dphiin$ iterations, and compute the roots of the GMRES residual polynomial $\pi_{\rm in}$.  These roots define the polynomials $\phi_{\rm in}$ and $p_{\rm in}$ according to $\pi_{\rm in}(z) = 1 - \phi_{\rm in}(z) = 1 - z@p_{\rm in}(z)$.  Next, run GMRES a second time (``outer GMRES"), now on the polynomial preconditioned system $Ap_{\rm in}(A)y = b$ (or $\phi_{\rm in}(A)y = b$), solving to the required tolerance.  (The approximate solution of the linear system is $x = p_{\rm in}(A) y$.)   Compute the roots of the residual polynomial from this outer GMRES run; they define the polynomials $p_{\rm out}$ and $\phi_{\rm out}$.  The degree of $\phi_{\rm out}$, say $d\phi_{\rm out}$, equals the number of outer GMRES iterations.  The overall polynomial $p$ that approximates the inverse is $p(z) = p_{\rm in}(z) p_{\rm out}(\phi_{\rm in}(z))$, of degree $d\phi_{\rm in} \times d\phi_{\rm out} - 1$.

\subsection{Nonsymmetric Lanczos}
The nonsymmetric Lanczos algorithm can solve linear equations in various ways, the most straightforward of which is the BiConjugate Gradient (BiCG) method \cite{Sa03}.  The BiCG residual polynomial $\pi$ has roots at the eigenvalues of the tridiagonal matrix built by the Lanczos process, which determine the inverse approximating polynomial $p$ through $\pi(z) = 1 - zp(z)$.  (Future work should explore more sophisticated approaches based on nonsymmetric Lanczos, building on Thornquist's investigation of BiCGSTAB and TFQMR~\cite{Tho06}.)

\begin{figure}[t!]
\begin{center}
\includegraphics[height=2.0in]{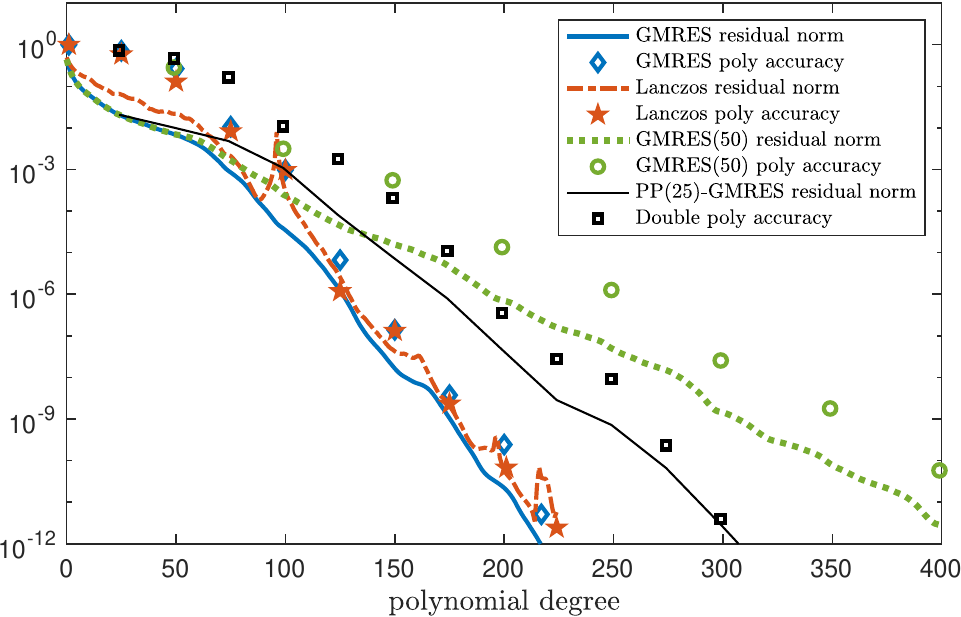}
\end{center}

\vspace*{-6pt}
\caption{Comparison of four methods for building polynomial approximations to $A^{-1}$, for the convection--diffusion problem from Example~1 ($n=2500$).  The lines show the residual norms of the linear system; the markers show the relative accuracy of the polynomial approximation to $A^{-1}$.}
\label{fig:AltMeths}
\end{figure}

\subsection{Comparison}
\ 
{\it Example 4.} 
We return to the matrix from Example~1 of dimension~2500 corresponding to  $- u_{xx} - u_{yy} + 2 u_{x} = f $.
Figure~\ref{fig:AltMeths} shows results for polynomials found by  GMRES, restarted GMRES, the double polynomial from polynomial preconditioned GMRES, and nonsymmetric Lanczos.  The linear equation residual norms are shown, along with the relative accuracy $\|A^{-1} - p(A)\| / \|A^{-1}\|$.

Nonsymmetric Lanczos has low orthogonalization costs and finds polynomials that are almost as low degree for a given accuracy as those from full GMRES.\ \  However, it requires more matrix-vector products, multiplying with both $A$ and $A^*$ at each iteration.  The method can also be unstable, and for an indefinite matrix, Ritz values can  fall near the origin. 
The accuracy of the double polynomial generally stays ahead of the restarted GMRES polynomial, and requires even less orthogonalization. 
Thus we will mostly use the double polynomial in the rest of this paper.

\section{Linear Equations with Multiple Right-hand Sides} \label{sec:mult_rhs}

Given a good polynomial approximation to $A^{-1}$, one can quickly solve additional linear systems: 
simply multiply each right-hand side by $p(A)$, as in Algorithm~\ref{Alg:MultRHS}. 
One simply needs to represent $p$ via roots of $\pi$, whether obtained from full GMRES, or one of the alternatives discussed in the last section.  
The process for double polynomial preconditioning is a bit more subtle, and so is detailed in Algorithm~\ref{Alg:MultRHSdoub}.

\begin{algorithm}[h!]
    \caption{Solve multiple right-hand sides with a polynomial from GMRES}
    \begin{description}
    \item[1.] {\bf First right-hand side system.} Solve the first system $Ax^{(1)}=b^{(1)}$ to the requested relative residual tolerance with full GMRES.\ \ Find the harmonic Ritz values from the last GMRES iteration, and use them to build a polynomial approximation $p(A)$ to $A^{-1}$.\ \   Add roots for stability, if necessary (\cite[alg.~2]{PPGStable}).
    \item[2.] {\bf Additional right-hand side systems.} Apply $p(A)$ to the right-hand sides of the other systems.  For the systems $Ax^{(j)}=b^{(j)}$, $j=2:\nrhs$, compute $p(A)b^{(j)} \approx x^{(j)}$ using \cite[alg.~3]{PPGStable}.
\end{description}
\label{Alg:MultRHS}
\end{algorithm}

\begin{algorithm}[t!]
    \caption{Solve multiple right-hand sides with a double polynomial}
    \begin{description}
    \item[1.] {\bf First right-hand side.} Choose the degree $\dphiin$ of the inner polynomial $\phi_{\rm in}$.  Run $\dphiin$ iterations of GMRES on the first right-hand side system $Ax^{(1)}=b^{(1)}$ to find the roots of the residual polynomial $\pi_{\rm in}$.  These roots define the inner polynomials $p_{\rm in}$ and $\phi_{\rm in}$.  Solve the first right-hand side system to the requested relative residual tolerance with polynomial preconditioned GMRES, PP($\dphiin$)-GMRES.\ \  The harmonic Ritz values from this run of GMRES give the roots of the outer GMRES residual polynomial; they define the outer polynomial $p_{\rm out}$.
    \item[2.] {\bf Additional right-hand side systems.} For the systems $Ax^{(j)}=b^{(j)}$, $j=2:\nrhs$, compute $p_{\rm in}(A)p_{\rm out}(\phi_{\rm in}(A)) b^{(j)} \approx x^{(j)}$ using \cite[alg.~1]{PPGStable} for $\phi_{\rm in}(A)$ and~\cite[alg.~3]{PPGStable} for both $p_{\rm in}$ and $p_{\rm out}$.
\end{description}
\label{Alg:MultRHSdoub}
\end{algorithm}

{\it Example 5.}   Consider the discretization of  $- u_{xx} - u_{yy} + 2 u_{x} - 10^2 u = f $ on the unit square with 200 interior grid points in each direction, giving $A$ of dimension $n=40{,}000$.  The $-10^2 u$ term makes the matrix indefinite and poses a challenge for conventional solvers like restarted GMRES and BiCGStab~\cite{vdV92}. We seek to solve 10~systems with random  right-hand sides (normally distributed entries, scaled to unit norm) to  residual norm $10^{-8}$.  The first two rows of Table~\ref{Tab:CDg10} show the results of standard methods.
Restarted GMRES(100) with a limit of 40{,}000 cycles not only takes over 28~hours; it only solves five systems to accuracy below $10^{-8}$ (others as high as $6\times 10^{-3}$).  BiCGStab with a limit of 100{,}000 iterations takes 380~seconds and only solves four systems to $10^{-8}$ (others as high as $3.5\times 10^{-4}$).  
The remaining rows show results for polynomial inverse approximation.
First, running full GMRES to relative residual tolerance of $10^{-11}$ creates a degree~1345 polynomial $p$ (which includes two extra roots added for stability control; see Section~7).  The next nine systems $Ax^{(j)} = b^{(j)}$ are solved by multiplying $p(A) b^{(j)}$, and all reach residual norms below $6 \times 10^{-9}$.   Finding $p$ and solving the first system takes 162~seconds,  while the next nine systems take only 2.4~seconds. 
The double polynomial generated with PP(40)-GMRES gives a polynomial of higher degree, 2039. (The outer $\phi$ polynomial is originally degree 50 and one stability root is added, giving the overall degree $40\times 51-1 = 2039$.)  This method takes only 1.1~seconds to solve the first system and generate the approximation to $A^{-1}$; the following nine systems are solved in an additional 3.1 seconds, all reaching residual norm of $7.5\times 10^{-11}$ or better.  
Here the accuracy in solving the multiple right-hand side systems is about an order of magnitude behind the GMRES residual norm in developing the polynomial.  However, it is not always possible to know ahead of time what GMRES residual norm is needed (see above that the single polynomial from GMRES was not as accurate). 
For the last line of the table we use nonsymmetric Lanczos to construct $p$.  The method is also very quick (more time for finding the polynomial, but even faster for solving the other systems, to similar accuracy).  

\begin{table}
\caption{Example~5:  convection-diffusion matrix from $- u_{xx} - u_{yy} + 2 u_{x} - 10^2 u = f $ of size $n = \mbox{40,000}$ with 10 random right-hand sides.  Compare using various $p(A) \approx A^{-1}$ to standard methods.}

\vspace*{-3pt}
\begin{center}
\begin{tabular}{|c|c|c|} \hline\hline
method      & total MVP's  & time \\ \hline \hline
\rule[-6pt]{0pt}{16pt}GMRES(100)   & 21.5 million  & 28.6 hours     \\ \hline
\rule[-6pt]{0pt}{16pt}BiCGStab    & 1.37 million   & 380 seconds     \\ \hline\hline
\rule[-6pt]{0pt}{16pt} $p(A)$, deg = 1345    & 13{,}451 & 162 + 2.4 seconds  \\ \hline
Double Polynomial & \smash{\raisebox{-6pt}{20{,}749}}       & \smash{\raisebox{-6pt}{1.1 + 3.1 seconds}}\\
${\rm deg} = 40\times 51-1 = 2079$        &   &       \\ \hline
Nonsymmetric Lanczos & \smash{\raisebox{-6pt}{14{,}456}}  & \smash{\raisebox{-6pt}{3.4 + 2.3 seconds}}   \\
deg = 1314  &       &       \\ \hline
\hline
\end{tabular}
\end{center}
\label{Tab:CDg10}
\end{table}

{\it Example 6.} 
We use the matrix from the last example but make it larger at $n=160{,}000$ and try different amounts of indefiniteness (different values of $\gamma$).  We also experiment with different levels of standard preconditioning.  We compare solving 10~random right-hand side systems with BiCGStab (limit of 100{,}000 iterations) and with double polynomial approximations to $A^{-1}$.\ \ 
PP-GMRES is run to residual norm of $10^{-10}$ to generate the polynomial.  The degree of the inner polynomial $\phi_{\rm in}$ is~50 in all tests except for the case of $\gamma=20$ with no standard preconditioning.  For that case it is set to degree~100, because the degree~50 case is not sufficiently stable: the outer polynomial adds too many extra roots (17 stability roots added to a degree 120 polynomial).  This case points out that while our new polynomial approximation method can be very effective, additional research into stabilization is still warranted.  (Stability is discussed further in Section~7.)  

Table~\ref{Tab:CDindef} shows the results.  (ILU(0) refers to incomplete factorization with no fill-in, while ILU(.01) is MATLAB's Crout incomplete factorization with drop tolerance~0.01.)
Matrix-vector products are given per right-hand side, but the time is the total for all 10~systems.
The double polynomial degree and runtime increase with indefiniteness.  For $\gamma=20$ without preconditioning, the polynomial has composite degree of $12{,}599$.  However, this polynomial is much more effective than BiCGStab, which takes more time and does not give accurate answers.  With the more powerful preconditioning of ILU(.01), BiCGStab is effective, but still runs slower than the double polynomial approach.  

\begin{table}
\caption{Example~6: convection-diffusion matrix ($n=160{,}000$) from $- u_{xx} - u_{yy} + 2 u_{x} - \gamma^2 u = f $ with 10~random right-hand sides.  Compare  a double polynomial to BiCGStab for different values of $\gamma$ and different levels of preconditioning.  ILU(0) refers to incomplete factorization with no fill-in, while ILU(.01) is MATLAB's Crout incomplete factorization with drop tolerance~0.01.}

\vspace*{-3pt}
\begin{center}
\begin{tabular}{|c|c|c|c|c||c|c|c|}  \hline\hline
& & \multicolumn{3}{|c||}{Double Polynomial}                  &  \multicolumn{3}{|c|}{BiCGStab }              \\  \hline
\smash{\raisebox{-5pt}{$\gamma$}} & precon-  & degree of    & max res.          & time          & average       & max res.          & time      \\ 
                          & ditioner             & polynomial    & norm              & \small{(sec)}     & MVP's         & norm              & \small{(sec)}   \\ \hline \hline
 & none         & $1899$   & $4.0\times10^{-10}$    & 25           & 1666           & $1.0\times10^{-8}$     & 31       \\ \cline{2-8}
0 & ILU(0)       & $549$   & $7.9\times10^{-10}$    & 31           & 496           & $9.8\times10^{-9}$     & 30       \\ \cline{2-8}
 & ILU(.01)     & $349$   & $3.6\times10^{-11}$    & 74           & 245           & $9.9\times10^{-9}$     & 67       \\ \hline \hline
 & none        & $3749$  & $1.7\times10^{-9}$     & 49           & 198{,}106     & $3.0\times10^{-6}$     & 3885       \\ \cline{2-8}
10 & ILU(0)      & $1399$  & $1.2\times10^{-9}$     & 81           & 15{,}628     & $4.9\times10^{-7}$     & 927       \\ \cline{2-8}
 & ILU(.01)     & $1149$  & $4.3\times10^{-10}$     & 127           & 2519     & $9.9\times10^{-9}$     & 228       \\ \hline \hline

 & none       & $12{,}599$  & $1.9\times10^{-9}$    & 171            & 197{,}896      & $9.9\times10^{-3}$     & 4078       \\ \cline{2-8}
20 & ILU(0)     & $4499$     & $2.4\times10^{-8}$    & 236            & 26{,}886      & $9.6\times10^{-9}$     & 1604       \\ \cline{2-8}
 & ILU(.01)   & $2799$  & $2.8\times10^{-10}$    & 235            & 5457     & $9.8\times10^{-9}$     & 420       \\ \hline \hline
        
\end{tabular}
\end{center}
\label{Tab:CDindef}
\end{table}

{\it Example 7.}  
We investigate two matrices from SuiteSparse~\cite{SuiteSparse}: Bcircuit ($n=68{,}902$) and Circuit\_3 ($n=12{,}127)$.  There are 10 random right-hand side systems.  BiCGStab is run with the limit of 100{,}000 iterations.    PP-GMRES is run to $10^{-10}$ to generate a double polynomial approximation to $A^{-1}$. For both tests, the degree of the inner polynomial $\phi_{\rm in}$ is 100 for ILU(0) and 50 for ILU(.01).  For this example, we change the $\pofcutoff$ (see Subsection 1.2) to $4$, which gives slightly better results.  These results are in Table~\ref{Tab:SuiteSp}.

For the matrix Bcircuit with ILU(0) preconditioning, the new polynomial method gives much better results.  All 10 right-hand sides are accurate to $1.1 \times 10^{-8}$ or better.  BiCGStab gives poor results for most right-hand sides (only two have residual norm below $10^{-5})$ and it takes more time.  BiCGStab is sensitive to this difficult problem.  However, the polynomial method can also be sensitive.  With the degree of the inner polynomial $\phi_{\rm in}$ set to 50, the spectrum of $\phi_{\rm in}(A)$ has one outstanding eigenvalue, and while the outer polynomial adds 11 roots for stability (so degree 129+11), it is still not stable.  The composite polynomial gives residual norms over $10^{10}$ for right-hand sides 2 through 10.  Similar to one case of the previous example, the polynomial method is sensitive to the choice of inner polynomial degree.  Instability sometimes happens with these preconditioned SuiteSparse matrices, because the preconditioning causes one or a few very outstanding eigenvalues.  Next, with ILU(.01) preconditioning, the problem is easier and both methods are effective.  The polynomial approach gives more accuracy and runs somewhat faster.  

Next we consider the matrix Circuit\_3.  For ILU(0), the polynomial method struggles, giving residual norms as high as $3.4 \times 10^{-3}$.  However, it is much faster and effective than BiCGStab, which makes no progress.  For the better preconditioning, the polynomial method works well, but BiCGStab still struggles (six of the systems end up with residual norms above $10^{-2}$).  If even more powerful preconditioning is applied, it is possible to make BiCGStab effective and have the two approaches give similar results.

\begin{table}
\caption{Example~7:   Compare BiCGStab to using a double polynomial approximation to the inverse  for solving 10 random right-hand side systems.  Two matrices are used (Bcircuit, $n=68{,}902$ and 
Circuit\_3, $n=12{,}127$) with two different levels of preconditioning.  ILU(0) refers to incomplete factorization with no fill-in, while ILU(.01) is MATLAB's Crout incomplete factorization with drop tolerance 0.01. }

\vspace*{-3pt}
\begin{center}
\begin{tabular}{|c|c|c|c|c||c|c|c|}  \hline\hline
& & \multicolumn{3}{|c||}{Double Polynomial}   &  \multicolumn{3}{|c|}{BiCGStab }   \\  \hline
\smash{\raisebox{-7pt}{matrix}} & precond-   & degree     & max res.          & time          & average       & max res.          & time      \\ 
        &    itioner    & of poly    & norm              & \small{(sec)}     & MVP's         & norm              & \small{(sec)}   \\ \hline \hline
\smash{\raisebox{-7pt}{Bcircuit}}   &  ILU(0)   & $6766$   & $1.1\times10^{-8}$     & 134      & 9179           & $1.1\times10^{-2}$     & 210       \\ \cline{2-8}
            &  \!ILU(.01)\! & $1611$  & $1.4\times10^{-10}$     & 26       & 1703     & $1.0\times10^{-8}$     & 36       \\ \hline
       \hline

\smash{\raisebox{-7pt}{\!Circuit\_3\!}}  &  ILU(0)   & $8045$   & $3.4\times10^{-3}$     & 16    & 28{,}947  & $1.0$     & 128       \\ \cline{2-8}
            & \!ILU(.01)\!  & $1517$  & $6.3\times10^{-10}$     & 3.7    & 1081     & $9.2\times10^{-1}$     & 4.3       \\ \hline
     \hline

\end{tabular}
\end{center}
\label{Tab:SuiteSp}
\end{table}

We finish this section with a theorem about accuracy of multiple right-hand side systems for diagonalizable $A$.\ \ If one finds $p(A)$ from GMRES applied to $b^{(1)}$ and then uses it to solve a system with right-hand side $b^{(2)}$, the size of the residual $r^{(2)} = b^{(2)}-Ap(A)b^{(2)}$ can be bounded by a term that depends on the potential deficiency of $b^{(1)}$ in any eigenvectors of $A$ (times a term that describes the departure of $A$ from normality).

\begin{theorem}
    Let $A\in\C^{n\times n}$ be diagonalizable, $A = Z \Lambda Z^{-1}$, and let $Ax^{(1)}=b^{(1)}$ and $Ax^{(2)}=b^{(2)}$ be two  systems with unit norm right-hand sides.  Let $A$ have eigenvalues $\lambda_1, \ldots, \lambda_n$ 
    and corresponding eigenvectors $z_1, \ldots, z_n\in\C^n$.  Expand the right-hand sides as $b^{(1)} = Z(Z^{-1}b^{(1)}) = \sum\beta_i^{(1)} z_i$ and $b^{(2)} = Z(Z^{-1}b^{(2)}) = \sum\beta_i^{(2)} z_i$.  Let the first residual vector be $r^{(1)} = b^{(1)} - A \wh{x}^{(1)}$, where $\wh{x}^{(1)}$ is the GMRES approximate solution with $\wh{x}^{(1)} = p(A)b^{(1)}$. 
    Obtain the second residual by premultiplying this $p(A)$ generated from the first linear system against the second right-hand side: $r^{(2)} = b^{(2)}-A \wh{x}^{(2)}$, where $\wh{x}^{(2)}  = p(A)b^{(2)}$. Then 
    \[    \|r^{(2)}\| \leq  \left(\|Z\| \|Z^{-1}\| \max\bigg|\frac{\beta_i ^{(2)}}{\beta_i ^{(1)}}\bigg|\right) \|r^{(1)}\|.\]
\end{theorem}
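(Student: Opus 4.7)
The key observation is that $\wh{x}^{(j)} = p(A)b^{(j)}$ uses the \emph{same} polynomial $p$ in both systems, so both residuals are images of the respective right-hand sides under the same polynomial: $r^{(j)} = b^{(j)} - A@p(A)b^{(j)} = \pi(A) b^{(j)}$ for $j=1,2$, where $\pi(z) = 1 - z@p(z)$. The plan is therefore to diagonalize, express each residual in the eigenvector basis, and then relate the coordinates of $r^{(2)}$ to those of $r^{(1)}$ entry-wise via the ratios $\beta_i^{(2)}/\beta_i^{(1)}$.

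Concretely, using $A = Z\Lambda Z^{-1}$ and $Z^{-1}b^{(j)} = [\beta_1^{(j)},\ldots,\beta_n^{(j)}]^T$, I would write
\[
  Z^{-1} r^{(j)} \ =\ \pi(\Lambda) Z^{-1} b^{(j)}
  \ =\ \bigl[\pi(\lambda_1)\beta_1^{(j)},\ \ldots,\ \pi(\lambda_n)\beta_n^{(j)}\bigr]^T.
\]
Introducing the diagonal matrix $D := \mathrm{diag}(\beta_1^{(2)}/\beta_1^{(1)},\ldots,\beta_n^{(2)}/\beta_n^{(1)})$, I observe that $D$ and $\pi(\Lambda)$ commute, so
\[
  Z^{-1} r^{(2)} \ =\ \pi(\Lambda) D\, (Z^{-1} b^{(1)}) \ =\ D\, \pi(\Lambda)\, Z^{-1} b^{(1)} \ =\ D\, Z^{-1} r^{(1)},
\]
which means $r^{(2)} = Z D Z^{-1} r^{(1)}$.

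Taking 2-norms and using submultiplicativity,
\[
  \|r^{(2)}\| \ \le\ \|Z\|\,\|D\|\,\|Z^{-1}\|\,\|r^{(1)}\|,
\]
and since $D$ is diagonal its 2-norm equals $\max_i |\beta_i^{(2)}/\beta_i^{(1)}|$, delivering the claimed bound. The only subtleties worth pausing on are that the argument tacitly requires $\beta_i^{(1)} \ne 0$ for every $i$ (otherwise the ratio is undefined, which matches the practical observation that $p(A)$ cannot correct for eigencomponents absent from the first right-hand side), and that the factor $\kappa(Z) = \|Z\|\|Z^{-1}\|$ arises solely from transforming between the eigenbasis and the standard basis; there is no hidden dependence on $p$ or on the iteration count. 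I do not anticipate a genuine obstacle — this is essentially a one-line manipulation once both residuals are written in the eigenbasis.
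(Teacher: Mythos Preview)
Your proof is correct and follows essentially the same approach as the paper: both diagonalize $A$, express $Z^{-1}r^{(j)}$ componentwise as $\pi(\lambda_i)\beta_i^{(j)}$, relate the two residuals through the ratios $\beta_i^{(2)}/\beta_i^{(1)}$, and absorb the basis change into the factor $\kappa(Z)$. The only cosmetic difference is that you package the ratios into a diagonal matrix $D$ and write $r^{(2)} = ZDZ^{-1}r^{(1)}$ before applying submultiplicativity in one step, whereas the paper carries the componentwise sums explicitly and bounds $\|Z^{-1}r^{(2)}\|$ by $\max_i|\beta_i^{(2)}/\beta_i^{(1)}|\,\|Z^{-1}r^{(1)}\|$ before invoking $\|Z^{-1}r^{(1)}\|\le\|Z^{-1}\|\|r^{(1)}\|$.
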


\begin{proof}
Using that $r^{(2)} = b^{(2)}-A \widehat{x}^{(2)} = b^{(2)}-A p(A)b^{(2)} = \pi(A)b^{(2)}$ and the eigenvalue decomposition of $\pi(A)$,
\begin{equation} \label{eq:r1}
r^{(2)} = \pi(A)b^{(2)} 
= Z\pi(\Lambda)Z^{-1} b^{(2)} 
= \sum_{i=1}^n \beta_i ^{(2)} \pi(\lambda_i)z_i.
\end{equation}
Note that $\|r^{(2)}\| \le \|Z\| \|Z^{-1} r^{(2)}\|$, and since $\Lambda$ is diagonal, 
\begin{align*}
\|Z^{-1}r^{(2)}\|^2 \ &=\  
\|\pi(\Lambda) (Z^{-1}b^{(2)})\| 
 = \sum_{i=1}^n \big| \beta_i^{(2)} \pi(\lambda_i)  \big|^2 
= \sum_{i=1}^n \bigg| \frac{\beta_i^{(2)}}{\beta_i^{(1)}}\cdot \beta_i^{(1)} \pi(\lambda_i)\bigg|^2 \\
&\leq \max\bigg|\frac{\beta_i ^{(2)}}{\beta_i ^{(1)}}\bigg|^2\ \sum_{i=1}^n \big|\beta_i^{(1)} \pi(\lambda_i) \big|^2 \\ 
\ &=\ \max\bigg|\frac{\beta_i ^{(2)}}{\beta_i ^{(1)}}\bigg|^2 \|\pi(\Lambda) Z^{-1} b^{(1)}\|^2
\ =\ \max\bigg|\frac{\beta_i ^{(2)}}{\beta_i ^{(1)}}\bigg|^2 \|Z^{-1} r^{(1)}\|^2.
\end{align*} 
Thus $\|r^{(2)}\| \le 
\|Z\| \|Z^{-1}\| \big(\max_i |\beta_i^{(2)}/\beta_i^{(1)}|\big) \|r^{(1)}\|.$
\end{proof}

\section{Deflated polynomials for solving multiple right-hand sides} \label{sec:deflated}

Small magnitude eigenvalues slow the convergence of iterative methods.  After solving the first linear system with GMRES, one can use the resulting Krylov subspace to obtain approximations to the eigenvectors associated with the problematic small eigenvalues.  These approximate eigenvectors can then be projected out from subsequent right-hand sides, greatly reducing the influence of the small eigenvalues and thus expediting convergence.  
This process, briefly mentioned in Subsection~\ref{sec:reviewsub3}, is called \emph{deflation}.
Here we explain how to integrate this idea with polynomial approximations of $A^{-1}$ for lowering the required polynomial degree, and then solving subsequent linear systems.

Algorithm~\ref{Alg:MultRHSDeflated} describes three main steps for solving $Ax^{(j)} = b^{(j)}$ for $j=1,\ldots, \nrhs$.  
\emph{Step~1.}~Solve the first system using polynomial preconditioned GMRES.\ \ From the resulting outer subspace, find approximate eigenvectors via a standard Rayleigh--Ritz procedure.  Use these  to deflate the most significant small eigenvalues from the other right-hand systems.  \emph{Step~2.} Solve the deflated second system with polynomial preconditioned GMRES using the same inner polynomial as before.  This solve develops a double polynomial, which we call a \emph{deflated polynomial}.  If the deflation is effective for this second system, the polynomial degree is reduced. \emph{Step~3.}~Apply this deflated polynomial to solve the systems with all other (deflated) right-hand sides.  

\begin{algorithm}
    \caption{A deflated double polynomial for solving multiple right-hand sides}
    \begin{description}
    \setcounter{enumi}{-1}
    \item[0.] {\bf Preliminary.}  Choose $\dphiin$, the degree of the inner polynomial $\phi_{\rm in}$.  Pick $\nev$, the number of eigenvalues to deflate.  Choose relative residual norm tolerances for the three steps: the first system is solved to $\rtol_1$, the second to $\rtol_2$ and $\rtol_3$ for the other systems.
    \item[1.] {\bf Solve first system and compute approximate eigenvectors.} Run GMRES for $\dphiin$ iterations to develop the inner polynomials $p_{\rm in}$ and $\phi_{\rm in}$.  Then solve the first right-hand side system with polynomial preconditioned GMRES, i.e., run PP($\dphiin$)-GMRES with $\rtol_1$.  From the subspace thus developed, apply the Rayleigh--Ritz procedure to compute approximate eigenvectors corresponding to the $\nev$ smallest eigenvalues~\cite{PPArn}.
    \item[2.] {\bf Develop the deflated polynomial with the second system.} Deflate the second right-hand side system with a Galerkin projection (alg.~2 in \cite{MGLE}) over the approximate eigenvectors (solve $V^*\kern-2pt AV d^{(2)} = V^* b^{(2)}$, then $x^{(2)}_e = V d^{(2)}$ and $r^{(2)} = b^{(2)} - A x^{(2)}_e$, with the deflated system being $A(x^{(2)} -x^{(2)}_e) = r^{(2)}$).  Then apply PP($\dphiin$)-GMRES, with the same inner polynomial developed earlier, to the deflated second system with $\rtol_2$.  This generates an outer polynomial $p_{\rm out}$ and thus the deflated double polynomial $p(z) = p_{\rm in}(z) p_{\rm out}(\phi_{\rm in}(z))$.  
    \item[3.] {\bf Other systems.}  For $A x^{(j)} = b^{(j)}$, $j=3,\ldots,\nrhs$, project over the approximate eigenvectors to get the partial solution $x_e^{(j)}$ and deflated system $A (x^{j} - x_e^{(j)}) = r^{(j)}$.  The approximate solution to the original system is $x^{(j)} = x_e^{(j)} + p(A) r^{(j)}$.   
    \item[4.] {\bf Optional.}  Reapply the deflated polynomial to systems that have not converged to $\rtol_3$, including the second system if more accuracy than $\rtol_2$ is needed.  Each  reapplication needs a projection then multiplication by $p(A)$.
\end{description}
\label{Alg:MultRHSDeflated}
\end{algorithm}

For a significantly nonnormal matrix, deflation is less effective unless both right and left eigenvectors are computed.  
For these cases, we will consider applying a lower degree polynomial more than once; see Example~9. 

{\it Example 8.}  We use the matrix BWM2000 from SuiteSparse~\cite{SuiteSparse}.  
Though not large ($n=2000$) and only mildly indefinite (all eigenvalues have negative real parts, except for two that are barely positive), solving linear equations with this matrix can be difficult.  The right-hand sides are again random unit vectors.  The parameters are $d\phi_{\rm in}=50$, $\nev=30$, $\rtol_1 = 10^{-11}$, $\rtol_2 = 10^{-9}$, and $\rtol_3 = 10^{-8}$.  PP(50)-GMRES takes 64~iterations and finds 30 approximate eigenvectors with residual norms ranging from $6.5\times 10^{-8}$ to $4.7\times 10^1$.  Deflated PP-GMRES on the second system takes 16 iterations.  Figure~\ref{Fig:pAif20} shows the convergence of PP(50)-GMRES on the \emph{first} system with a (blue) dashed line and BiCGStab on the same system with a (yellow) dash-dot line.  
BiCGStab converges only to residual norm $4.6\times10^{-3}$ in 45{,}405 matrix-vector products (most not shown on the plot).  
The (red) solid line shows the solution of the deflated system corresponding to the \emph{second} right-hand side, which proceeds much quicker due to the deflation.

\begin{figure}[t!]
\begin{center}
\includegraphics[height=2.0in]{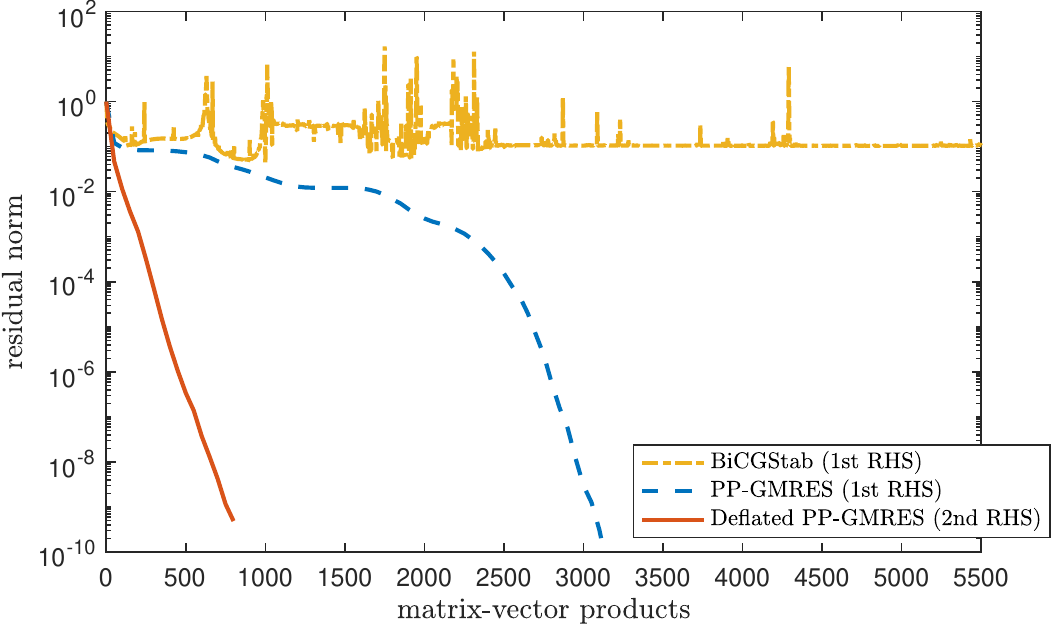}
\end{center}
\vspace{-7pt}
\caption{Example 8: the matrix is BWM2000.  Convergence is shown for BiCGStab, polynomial preconditioned PP(50)-GMRES with no restarting, and for deflated PP(50)-GMRES.}
\label{Fig:pAif20}
\end{figure}

Table~\ref{Tab:BWM} shows the cost for solving 10~right-hand sides.  The deflated double polynomial method uses 0.37 seconds to solve the first system and compute approximate eigenvectors.  Then 0.09 seconds are used for the second system and generating a deflated polynomial.  The other systems take 0.07 seconds, and all solutions have residual norms of $8.3\times10^{-10}$ or smaller.  BiCGStab is applied to the ten systems with tolerance of $10^{-8}$ and a maximum of 100,000 iterations.  This process takes 6.9 seconds, but only three of the systems converge.  The log average for the systems (exponential of the average of the natural logs of the residual norms) is $5.3\times 10^{-6}$.

\begin{table}[t!]
\caption{Example~8:  BWM2000 matrix of size $n = 2000$ with 10 right-hand sides.  Compare a deflated polynomial for multiple right-hand sides to BiCGStab.}

\vspace*{-3pt}
\begin{center}
\begin{tabular}{|c|c|c|c|} \hline\hline
method                  & total MVP's   & total time                & log avg.\thinspace res.\thinspace norm  \\ \hline \hline
Deflated Double Poly    & \smash{\raisebox{-6pt}{10{,}460}}      & 0.37 + 0.09 + 0.07        & \smash{\raisebox{-6pt}{$3.1\times 10^{-10}$}}          \\
deg = $50\times 16-1 = 799$     &               &  = 0.53 seconds           &                       \\ \hline
BiCGStab                & \smash{\raisebox{0pt}{550{,}313}}     & 6.9 seconds              &  \smash{\raisebox{0pt}{$5.3\times10^{-6}$}}          \\ \hline
\hline
\end{tabular}
\end{center}
\label{Tab:BWM}
\end{table}

{\it Example 9.}  Consider the one-dimensional convection-diffusion equation $- u{''} + \alpha u'- 30^2 u = f $.  This example shows that it can take careful implementation of the deflated polynomial for significantly nonnormal and indefinite problems, but the method can be cheap and accurate.  
Table~\ref{Tab:CD1} shows results for $\alpha=0,5,25$.  With $n=1000$, the matrix is fairly ill-conditioned and the $-30^2 u$ term makes these matrices indefinite.  For the symmetric case of $\alpha=0$, the first system is solved with PP(25)-GMRES to relative residual tolerance of $10^{-11}$ (taken to this level to give eigenvectors a chance to converge further), requiring 64~outer iterations and 1625 matrix-vector products. Then 30~eigenvectors are approximated.  Next, the deflated polynomial is found of degree~249, and it accurately solves the other right-hand sides.  The residual norms are $6.9\times 10^{-9}$ or better, and the average number of matrix-vector products (including the first right-hand side) is~387.   BiCGStab needs over two orders of magnitude more matrix-vector products, but it does produce accurate results.   

\begin{table}[b!]
\caption{Example 9: one-dimensional convection-diffusion equation $- u{''} + \alpha u' - 30^2 u = f $ with $n=1000$.   }

\vspace*{-3pt}
\begin{center}
\begin{tabular}{|c|c|c|c||c|c|c|}  \hline\hline
& \multicolumn{3}{|c||}{Double Polynomial}                  &  \multicolumn{2}{|c|}{BiCGStab }              \\  \hline
$\alpha$ & rtol's   & degree of     & average               & average       & log avg.             \\ 
        & $(\log_{10})$ & deflated polynomial & MVP's                 & MVP's         & res.\thinspace norm         \\ \hline \hline
0       & $-11$,$-9$,$-8$    & $25\times 10-1=249$   & 392                   & 57{,}868      & $6.6\times 10^{-9}$         \\ \hline
5       & $-11$,$-9$,$-8$    & $25\times 62-1=1549$  & 1562                  & \smash{\raisebox{-6pt}{61{,}623}}      & \smash{\raisebox{-6pt}{$2.9\times 10^{-7}$}}           \\ \cline{1-4}
5       & $-11$,$-3$,$-8$   & $25\times 4-1=99$     & 437                   &  &  \\ \hline
25      & $-10$,$-2$,$-8$    & $25\times 3-1=74$     & 542                   & 66{,}275      & $1.4\times 10^{-3}$           \\ \hline
\hline
\end{tabular}
\end{center}
\label{Tab:CD1}
\end{table}

Next, consider $\alpha>0$.  BiCGStab requires slightly more matrix-vector products, and is less accurate.  The deflated polynomial is not effective for $\alpha = 5$: it has about the same degree as if no deflation was used, due to inaccuracy of the deflation in removing eigenvectors.  PP-GMRES converges rapidly at first due to the partial deflation, but slows down once the eigenvector components of the residual vector are reduced down to the level of the deflated components. At that point, the Krylov method must deal with these small eigenvalues, and the convergence plateaus while that happens;  
see the solid (red) curve in Figure~\ref{Fig:pAif21}. 
For effective use of the deflation, it pays to stop the PP-GMRES method when the not fully deflated eigenvalues begin to impede convergence. 
Doing so gives a low degree deflated polynomial that needs to be used more than once, with a deflation in-between each application of the polynomial.  
Table~\ref{Tab:CD1} shows the results of running PP(25)-GMRES on the second system to $\rtol_2 = 10^{-3}$, then applying it three times (with projections) to $b^{(j)}$ for $j=3,\ldots,10$ and twice for the partly solved right-hand side for $j=2$.   This approach gives accurate results, with a log average residual norm for the nine right-hand sides of $5.8\times 10^{-10}$.  The average matrix-vector products for all 10 systems is only 427.  

For higher nonnormality, $\alpha = 25$, a good result comes from  $\rtol_2 = 10^{-2}$.  The deflated polynomial is applied five or six times for $j=2,\ldots, 10$ to reach the desired residual norm level of $10^{-8}$.  While this process uses two orders of magnitude fewer matrix-vector products than BiCGStab and gives better accuracy, it required experimentation.  An automated method for choosing $\rtol_2$  would be quite desirable.

\begin{figure}[t!]
\begin{center}
\includegraphics[height=2.0in]{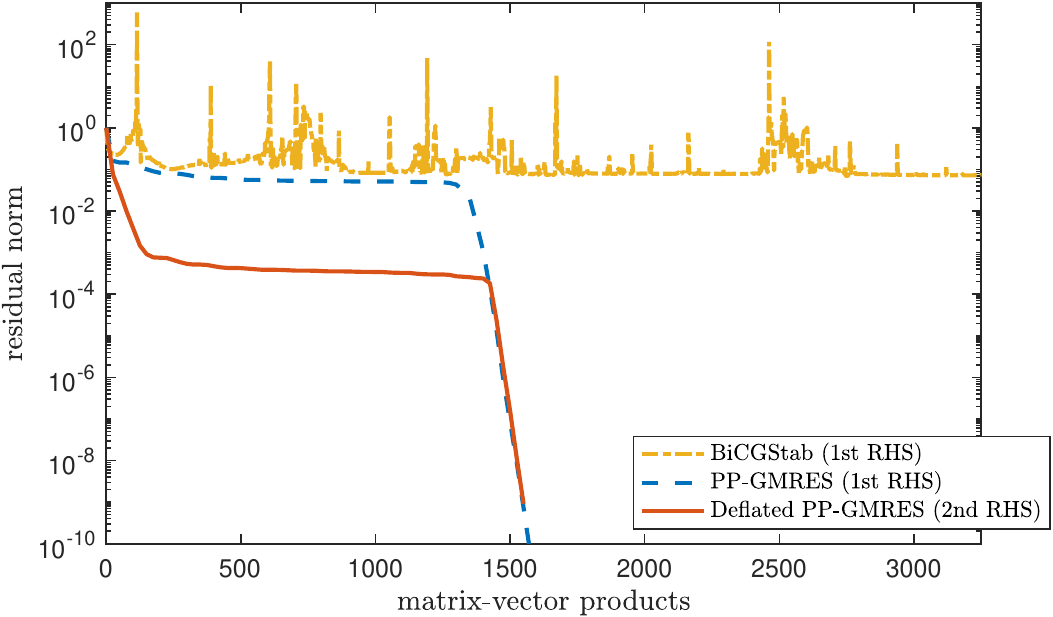}
\end{center}
\vspace{-7pt}
\caption{Example~9: convection-diffusion equation $- u{''} + 5 u'- 30^2 u = f $, $n=1000$.  Convergence is shown for BiCGStab, PP(25)-GMRES with no restarting, and for deflated PP(25)-GMRES.}
\label{Fig:pAif21}
\end{figure}

{\it Example 10.} In Lattice Quantum Chromodynamics (QCD), one important task is to estimate the trace of the inverse of a large non-Hermitian complex matrix.  This can be done with a Monte Carlo approach called Hutchinson's method~\cite{HutchTrace}, which requires solving many linear systems with random right-hand sides.  Here we use 10  right-hand sides with entries randomly drawn from $\{\pm1, \pm{\rm i}\}$.   The QCD matrix is from a $16^3$-by-$24$ lattice, giving $n = 1{,}179{,}648$.  The desired relative residual tolerance for the linear systems is $\rtol_3 = 10^{-5}$.  
The spectrum is roughly inside a circle in the right-half of the complex plane, with eigenvalues approaching the origin from above and below; the presence of small eigenvalues makes deflation important.  A two-sided projection is needed.  Once right eigenvectors are known, the left eigenvectors can be computed easily by exploiting the structure of this application.  Thus only one deflation projection is needed despite the significant nonnormality.  This projection solves $W^*\kern-1pt AV d = W^*r_0$ then $\widehat{x} = V d$, where $W$ has columns (approximately) spanning the left eigenvectors, and $V$ has columns (approximately) spanning the right eigenvectors.  

Table~\ref{Tab:QCD} compares the deflated polynomial to the non-deflated polynomial and BiCGStab.   The non-deflated polynomial is found by solving the first right-hand side with PP(40)-GMRES to relative residual norm below $10^{-6}$.  This requires 52 iterations, so the polynomial is of high degree ($40\times 52-1 = 2081$).   This polynomial solves the next nine systems to $2.4\times 10^{-5}$ or better, just worse than the desired accuracy; a slightly higher degree polynomial is needed.  BiCGStab is solved to relative residual norm of $10^{-5}$ and gives accurate answers, but requires much more time.  

Finally, to find the deflated polynomial, the first right-hand side is solved with PP(40)-GMRES to relative residual tolerance of $\rtol_1 = 10^{-12}$, which  takes 78~iterations.  Then approximate eigenvectors are computed, and here only the $\nev=34$ of them that have residual norm below $10^{-3}$ are used (along with the associated left eigenvectors).  Deflated PP(40)-GMRES for the second right-hand side with $\rtol_2=10^{-6}$ runs for only 14~iterations and generates a polynomial of degree $40\times 14-1 = 559$, much lower than the non-deflated case.  This polynomial solves the remaining eight systems to relative residual norms of $3.4\times 10^{-6}$ or better.  Overall, the time is reduced by more than a factor of two compared to the non-deflated polynomial.  More time is needed for the first right-hand side (7.1 minutes versus 4.1 minutes), since it is over-solved.  However, the next nine are faster (9.6 minutes instead of 31.7 minutes).  

\begin{table}[t!]
\caption{Example~10: QCD matrix of size $n = 1{,}179{,}648$ with 10 random right-hand sides.  
Comparison of a deflated polynomial, non-deflated polynomial, and the conventional BiCGstab algorithm.}
\begin{center}
\begin{tabular}{|c|c|c|c|} \hline\hline
method                  & total MVP's   & total time             \\ \hline \hline
Deflated Double Polynomial    & \smash{\raisebox{-6pt}{8869}}      & 7.1 + 1.1 + 8.5     \\
deg = $40\times14-1 = 559$     &               &  = 16.7 minutes        \\ \hline
Non-deflated Double Polynomial & \smash{\raisebox{-6pt}{20{,}898}}     & 4.1 + 31.7     \\
deg = $40\times 52-1 = 2079$    &               &  = 35.8 minutes        \\ \hline
BiCGStab                & 203{,}481     & 375 minutes            \\ \hline
\hline
\end{tabular}
\end{center}
\label{Tab:QCD}
\end{table}

\section{Stability of the polynomial} \label{sec:stab}

When $A$ has a few eigenvalues that stand out from the rest of the spectrum, polynomials from GMRES can be prone to instability, especially as the degree increases.
Stability can often be improved by adding extra copies of the harmonic Ritz values near these outstanding eigenvalues; see Subsection~\ref{ssec:rev2}.  
We briefly consider implications for polynomials approximating $A^{-1}$.  The next example shows that it can be possible to find an effective polynomial even when there are very outstanding eigenvalues.

{\it Example 11.}
We use bidiagonal matrices with increasing separation of the large eigenvalues and thus increasing difficulty for stability.  All matrices are size $n=2500$ and have $0.2$ on the superdiagonal (to give a nontrivial departure from normality). Matrix~1 has eigenvalues $1, 2, 3, \ldots, n$ on the main diagonal.  Matrix~2 has diagonal $0.1, 0.2, 0.3, \ldots, 0.9, 1, 2, 3, \ldots, 2490, 2491$.  
While the large eigenvalues are not especially separated, they are more separated relative to the size of the small eigenvalues than for Matrix~1.  
Matrix~3 has diagonal $0.1, 0.2, 0.3, \ldots, 0.9, 1, 2, 3, \ldots, 2490, 2600$, giving one very well separated eigenvalue.  Matrix~4 has five outstanding eigenvalues, with diagonal  $0.1, 0.2, 0.3, \ldots, 0.9, 1$, $2, 3, \ldots, 2486, 2600, 2700, 2800, 2900, 3000$.

Table~\ref{Tab:Stab1} shows results both without and with the stability control of adding roots from~\cite[alg.~2]{PPGStable}.  The first right-hand side is solved with (full) GMRES to relative residual norm of $10^{-11}$, and the polynomial $p$ is then applied to nine other right-hand sides.  Without adding roots, the polynomial is effective in solving multiple right-hand sides only for Matrix 1.  In that case, the nine extra systems all have residual norms below $3.1\times 10^{-11}$.
For the next three matrices, $p$ is increasingly unstable with high $\pof$ values (as defined in~\cite[alg.~2]{PPGStable}) that indicate $p$ has a steep slopes; indeed, $p$ provides inaccurate solutions for the nine additional systems.

With stability control from~\cite[alg.~2]{PPGStable} using $\pofcutoff = 8$, results are good even with outlying eigenvalues and many added roots.  Matrix 3 gives the least accurate results.  However if $\pofcutoff = 4$, then all nine residual norms are below $2.3\times 10^{-11}$.  

The improvement with stability control is remarkable.  However, the next example is designed to challenge the stability control procedure.

\begin{table}[t!]
\caption{Example~11:  bidiagonal matrices with some increasingly outlying eigenvalues.  The effect of stability control is shown for solving systems with multiple right-hand sides.}
\begin{center}
\begin{tabular}{|c|c|c|c||c|c|}  \hline \hline
& \multicolumn{3}{|c||}{Without Stability Control}              &  \multicolumn{2}{|c|}{With Stability Control }  \\  \hline
\smash{\raisebox{-6pt}{matrix}} & degree of     & \smash{\raisebox{-6pt}{max $\pof$}}        & max residual      & roots      & max residual          \\[-2pt]
        & polynomial    &                   & norm              & added             & norm              \\ \hline \hline
1       & 324           & $2.5\times10^{1}$      & $3.1\times10^{-11}$    & 0                 & $3.1\times10^{-11}$    \\ \hline
2       & 596           & $9.5\times10^{22}$     & $5.4\times10^{6}$      & 12                & $2.7\times10^{-11}$    \\ \hline
3       & 596           & $2.3\times10^{103}$    & $5.3\times10^{87}$     & 19                & $5.7\times10^{-9}$     \\ \hline
4       & 597           & $7.9\times10^{216}$    & $4.5\times10^{201}$    & 68                & $1.5\times 10^{-11}$    \\ \hline
\hline
\end{tabular}
\end{center}
\label{Tab:Stab1}
\end{table}

{\it Example 12.}  We choose a diagonal matrix with entries $0.1, 0.2, 0.3, \ldots, 1, 2, 3, \ldots$, $50, 551, 552, 553, \ldots 1000,1501, 1502, 1503, \ldots, 2000,2501, 2502, 2503, \ldots, 3000, 3501$, \\ $3502, 3503, \ldots, 4491$.  This spectrum has four large gaps, and the residual polynomial $\pi$ has steep slope at eigenvalues at the gap edges.  Also contributing to the steepness is the presence of enough small eigenvalues to push the polynomial degree high.  Slopes are especially steep for the eigenvalue at~50 and the ones just below it.   

\begin{figure}[b!]
\begin{center}
\includegraphics[width=4in]{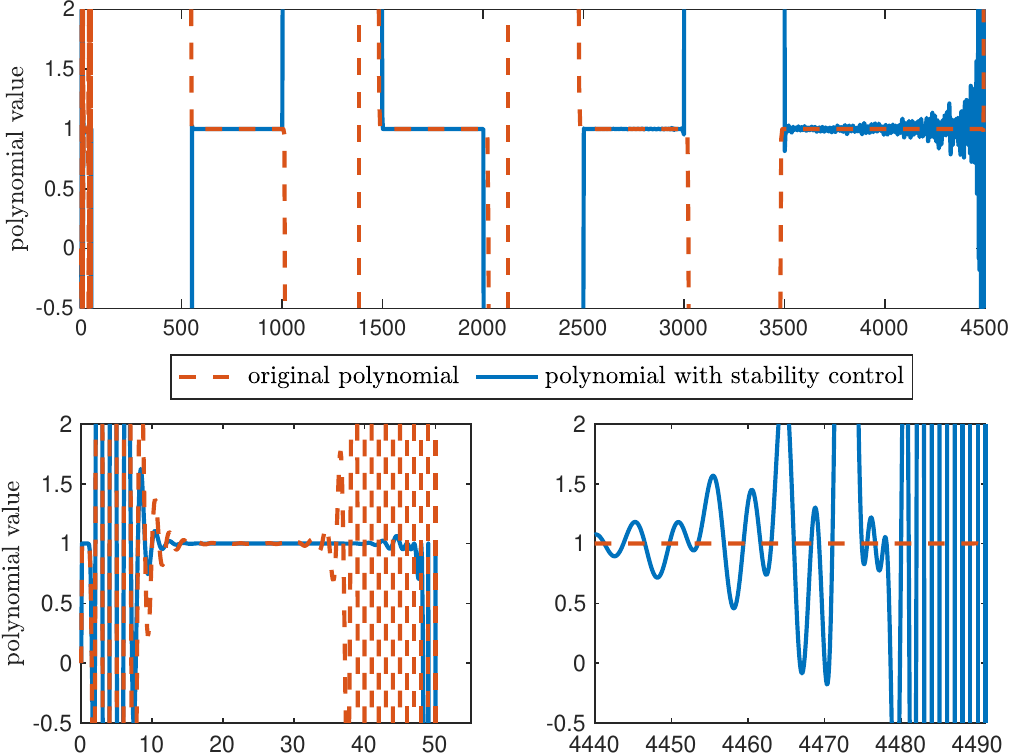}
\end{center}
\vspace{-5pt}
\caption{Example~12:  a matrix with four gaps in the spectrum, $n=2500$.  The top plot shows the original $\phi$ polynomial of degree $455$ (dashed red line) and with extra roots added for stability control (solid blue line).  The bottom plot zooms around the small and large parts of the spectrum.}
\label{fig:GapExa}
\end{figure}

We solve the first system to relative residual norm below $10^{-11}$, generating $p$ of degree~454.  Without  stability control, this polynomial does not accurately solve other systems:  the residual norms for nine right-hand sides are as high as $9.4\times 10^{-3}$.  With stability control, the results are even \emph{worse}: the largest of the nine residual norms is $3.2\times 10^{-1}$.  Seven roots are added, including extra roots near eigenvalues 50, 49, 48, 47 and 46, along with two at harmonic Ritz values in gaps of the spectrum.  
Figure~\ref{fig:GapExa} (top) shows the polynomial $\phi$ before  roots are added (dashed line) and with roots (solid line).  This polynomial needs to be close to~1 over the spectrum for the method to be effective.  Figure~\ref{fig:GapExa} (bottom) examines these polynomials over the small and large eigenvalues. 
Adding five roots near 50 multiplies the $\pi$ polynomial by linear factors like $(1 - z/50)$,  each of which is near $-100$ at the large eigenvalues.  While the polynomial oscillates less on the small part of the spectrum, it has big amplitudes around the large eigenvalues.  This causes two problems: instability due to a $\pof$ of $5\times 10^{13}$ at the largest eigenvalue, and, more importantly, the polynomial $\phi$ varies significantly from~1 at quite a few of the eigenvalues (e.g.,  4472, 4473 and 4474).  This inaccuracy makes $p(A)$ useless for solving linear equations.  

We propose an improved version of Algorithm~\cite[alg.~2]{PPGStable} that performs the stability test on roots in increasing order of magnitude, and then updates the $\pof$ values when a root is added; see Algorithm~\ref{Alg:RootAdding2}.  This algorithm improves the results so that the maximum of the nine residual norms is $4.0\times 10^{-6}$.  The limited accuracy is not due to instability; rather, the $\phi$ polynomial is still not close enough to~1 at some eigenvalues.  The largest deviation is at $\lambda_{1512}=3503$ where $\phi(\lambda_{1512}) = 1-5.2\times 10^{-5}$, which is significantly far from~1.  
The $\phi$ polynomial is more accurate at larger eigenvalues due to two added roots at the largest two eigenvalues, a result of the updated $\pof$ values.  This example demonstrates the danger of adding roots to the GMRES polynomial, because then the polynomials no longer correspond to a minimum residual method and may not be accurate at all eigenvalues. However, Algorithm~\ref{Alg:RootAdding2} with $\pof$ updating did improve accuracy.  In other testing with matrices that are difficult due to gaps in the spectrum or due to indefiniteness, Algorithm~\ref{Alg:RootAdding2} often improves the situation.

\begin{figure}[t!]
\begin{center}
\includegraphics[width=3.25in]{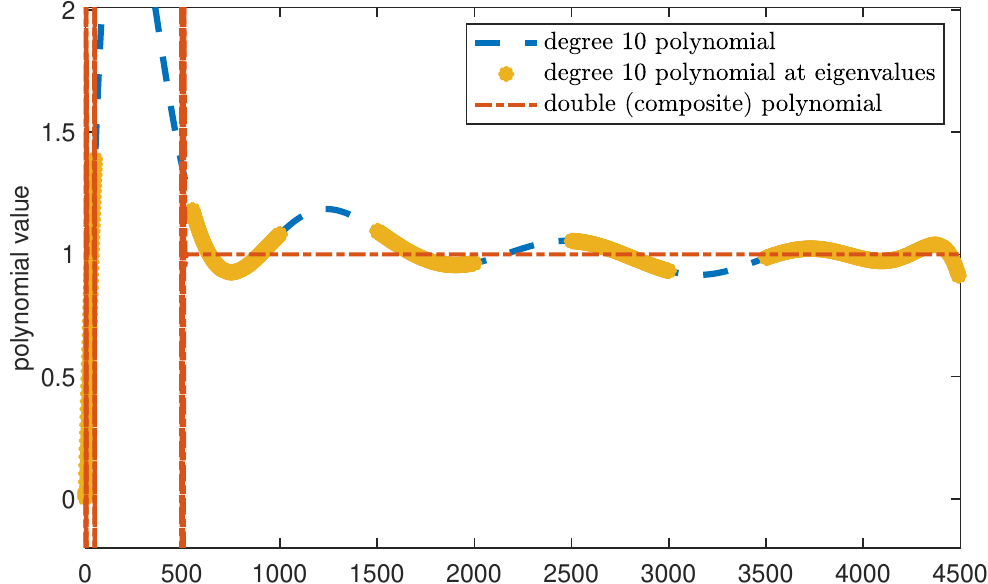}
\end{center}
\vspace*{-5pt}
\caption{Example~12:  a matrix with four gaps in the spectrum, $n=2500$, double polynomial approach.  The inner polynomial $\phi_{\rm in}$ is highlighted at the eigenvalues (blue line, yellow dots).  Compare this to the double polynomial $\phi_{\rm out}(\phi_{\rm in}(z)) = z*p_{\rm in}(z)*p_{\rm out}(\phi_{\rm in}(z))$ of degree~$670$ (red line).}
\label{fig:GapExb}
\vspace*{-10pt}
\end{figure}

\begin{algorithm}
    \caption{Adding roots to $\pi(z)$ for stability with $\pof$ updating}
    \begin{description}
 \item[1.] {\bf Setup:} Assume the $d$ roots ($\theta_1, \ldots, \theta_d$) of $\mrpoly$ have been computed and then sorted according to the modified Leja ordering~\cite[alg.~3.1]{BaHuRe}.  (For high-degree polynomials and/or large magnitude roots, use sums of logs in place of products, to prevent overflow and underflow in the Leja ordering calculations.)
 \item[2.] {\bf Compute \boldmath ${\it pof}(k)$:} For $k=1,\ldots,d$, compute $\pof(k) = \prod_{i \neq k} |1-\theta_k/\theta_i|$.  
 \item[3.] \textbf{Add roots using a reordered list of roots and update \boldmath${\it pof}$ values:} Reorder the roots and their corresponding $\pof\kern1pt$'s, 
by increasing magnitude.  For each root $\theta_k$ starting with the smallest, compute $\lceil \log_{10}(\pof(k)) - \pofcutoff)/14\rceil$.  Add that number of $\theta_k$ copies to the list of roots with the Leja ordering.  Add the first to the end of the list; if there are others, space them evenly between the first and last occurrence of $\theta_k$ (keeping complex roots together).
 Based on the added root(s), update the $\pof$ values of all roots that have not yet been considered.  Then continue checking the $\pof$ values.
 \item[4.] {\bf Apply a second Leja ordering:} For the list of $\theta_k$ values that is Leja ordered aside from the stabilizing roots, perform a second Leja ordering.  To give distinct values for the Leja algorithm, perturb these roots slightly for determining the order, but do not change the actual $\theta_k$ values.  (We  perturb to $(1 + 10^{-12}*\mbox{\em randn})*\theta_k$, where ``{\em randn}'' is a random Normal(0,1) number.)
\end{description}
\label{Alg:RootAdding2}
\end{algorithm}

We now describe another way to improve stability through use of the double polynomial.  For the same matrix with gaps, we run PP(10)-GMRES to relative residual norm below $10^{-11}$, which takes 67~iterations; this process yields the double polynomial.  No extra roots are needed for stability for either the inner or outer components of the double polynomial.  The $p$ polynomial is degree $10\times 67-1 = 669$, higher than the polynomial from GMRES.\ \   Applying this polynomial to the nine extra right-hand sides gives residual norms at or below $2.1\times 10^{-9}$.  Figure~\ref{fig:GapExb} shows the degree~10 inner polynomial $\phi_{\rm in}$ as a dashed line and highlights the values at the eigenvalues.  It also shows the composite $\phi$ polynomial, $\phi(z) = \phi_{\rm out}(\phi_{\rm in}(z))$, which is near $1$ throughout the spectrum.  This polynomial is also near $1$ over most of the gaps in the spectrum: the inner polynomial maps the gaps (except for the first one) into a zone where eigenvalues are also mapped, and so the outer polynomial needs to be near $1$ there.  The fact that the double polynomial is near $1$ over most gaps, instead of being unconstrained, partly explains why the higher degree is needed. 

Stability control and double polynomials enable polynomial approximation to the inverse for some difficult matrices, but these approaches are not foolproof.  One can construct adversarial examples with enough small eigenvalues and gaps in the spectrum to make an effective polynomial quite hard to find.  On the other hand, many applications have matrices that lack such challenges.  Polynomial approximation can be effective for these problems, even when a high-degree polynomial is needed.

\section{Conclusion}
It is often possible to construct an accurate moderate-degree polynomial approximation to the inverse of a large matrix.   Theory shows that accuracy of the polynomial generally follows the GMRES residual, although it can be effected by nonnormality and the interplay of the right-hand side and the eigenvectors.  The approximating polynomial can be found in several ways; here we focused on full GMRES and polynomial preconditioned GMRES, the latter of which builds double (composite) polynomials.  This composite approach can be more efficient due to reduced orthogonalization costs, though higher degree polynomials are often needed.

Applications for approximate polynomial inverses include solving systems of linear equations with multiple right-hand sides.  For difficult problems, the polynomial is often better than BiCGStab in expense and accuracy.  However, like all Krylov methods, a polynomial is not always effective.  This is particularly true for matrices that are ill-conditioned and have outstanding eigenvalues.  Our stability control often helps, as does using a double polynomial.  Further refinement of the stability control procedure is a promising area for further research. 

A deflated version of the polynomial uses approximate eigenvectors to lower the degree.  Multiple applications of the deflated polynomial may be needed, particularly for significantly nonnormal matrices.  Some fine-tuning is needed. This deflation can reduce the degree of the GMRES polynomial required for convergence, hence promoting stability.

In future research, we plan to consider both the nonsymmetric and symmetric Lanczos algorithms.  These methods have some natural stability control through roundoff error, which produce extra copies of Ritz values corresponding to outstanding eigenvalues (so-called ``ghost'' Ritz values).  Nonsymmetric Lanczos can find both right and left eigenvectors, which could help construct polynomials for deflation; the stability of this process will require careful assessment.  
   
\section*{Acknowledgments} The authors would like to thank the referees for their suggestions to improve the paper.

\end{document}